\newtheorem{theorem}{Theorem}[section]
\newtheorem{lemma}[theorem]{Lemma}
\newtheorem{definition}[theorem]{Definition}
\newtheorem{remark}[theorem]{Remark}
\newtheorem*{convention}{Convention}
\begin{document}

\setlength{\headheight}{15pt}
\setcounter{secnumdepth}{3}
\setcounter{tocdepth}{3}

\pagestyle{fancy}
\fancyhf{}
\rhead{Henry Ginn}
\lhead{\nouppercase{\leftmark}}
\cfoot{\thepage}

\def\logo{{\includegraphics[width=64mm]{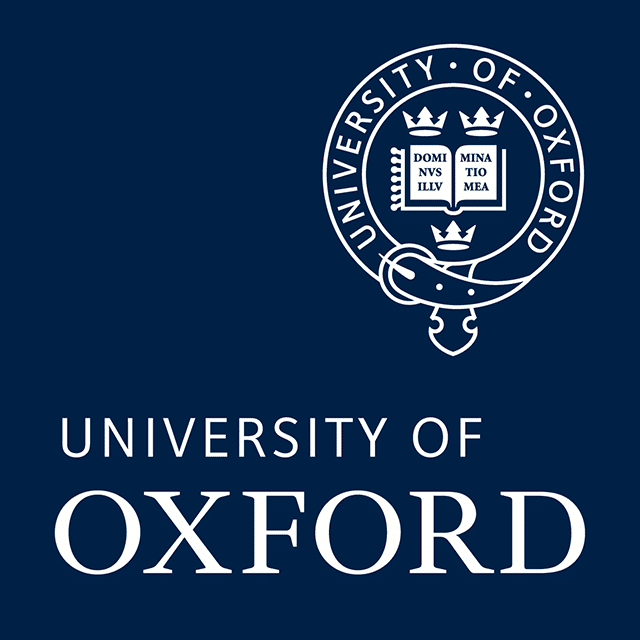}}}
\title{Lightning Helmholtz Solver}
\author{Henry Ginn}
\college{Exeter College}
\supervisor{Lloyd N. Trefethen}
\degree{Master of Mathematics}
\degreedate{Trinity 2022}

\maketitle

\begin{abstract}

In this dissertation we have applied Trefethen and Gopal's Lightning Method to solve the Helmholtz equation in the exterior of two dimensional piecewise smooth domains. The background theory motivating the method is presented, and we explore the optimal method implementation for the unit square, which is subsequently used to give a guide on parameter selection for a general region. The behaviour of the computed solutions is verified to act in accordance with our intuition and current understanding of wave propagation, and we show that the wave decays to approximately 0 in the shadow region.

\end{abstract}

\newpage
\tableofcontents
\newpage
\section{Introduction}

In the modern world, there seems to be an ever-growing demand upon mathematicians to deliver increasingly accurate solutions to increasingly difficult problems. Analytic solutions to such problems are a rare breed, and we find ourselves resorting to numerical methods to have any hope of attaining a satisfactory answer. In this dissertation, we aim to expand upon the \textit{the Lightning Method}, a promising new numerical method developed by Trefethen and Gopal \cite{doi:10.1073/pnas.1904139116}, looking at the particular case of the Helmholtz problem.

The method tackles domains with piecewise smooth boundaries, and even works with non convex domains. It is able to handle a wide variety of regions and boundary data, and with the help of our code it is very simple to use, only requiring basic knowledge when solving unambitious problems. For harder problems, some specific implementation choices must be selected to attain a useful output, and this is one of the areas we will explore. Given suitable choices, we have typically achieved 3-10 digits of accuracy, often in a few seconds, which is more than sufficient to produce good plots. Unfortunately we are limited by the machine precision of a standard computer, and the extremely poor conditioning of the matrices involved, so this method is not suited to high accuracy solutions. MATLAB's \cite{MATLAB} sluggish Hankel function implementation also means plots with a large number of evaluations can take an unsatisfactory amount of time to compute, especially for harder problems that can require significantly more evaluations. Whilst this method almost certainly won't become the new industry standard method, it certainly deserves its place in medium level black box solvers, and is an excellent way of finding an initial picture of the solution.

Trefethen and Gopal have applied the Lightning Method to the Laplace problem with great success \cite{doi:10.1137/19M125947X}, and provided some theoretical support for the convergence of the method as well. The motivation for the method comes from a result due to Newman, where rational approximation is used on $f(x) = |x|$ to achieve root exponential convergence in the degree of the rational functions used \cite{Newman1964RationalAT}. Trefethen and Gopal extend this idea to a region with corners using fundamental solutions to a partial differential equation, and perhaps surprisingly, achieve the same root exponential convergence globally over the entire region. In the case of the Laplace problem, these solutions are inverse linear functions, $\frac{1}{z-z_j}$, where the $z_j$ are specially chosen a priori. The Lightning Method prescribes how these $z_j$ are chosen, and because these degrees of freedom are fixed in advance, our problem ends up reducing to a least squares problem. This is a significantly simpler problem than if the poles were not already chosen, which is why the Lightning Method is so fast for what it does.

We give a brief overview of the algorithm and our implementation. For more detail, see section 3 of Trefethen and Gopal's paper for the Laplace problem \cite{doi:10.1137/19M125947X}, or the MATLAB \cite{MATLAB} code found in the appendix.
\begin{enumerate}
    \item The problem is defined
    \begin{enumerate}
        \item The boundary is defined in the complex plane by $m$ corners, $z^k$, $k = 1,\cdots, m$, and functions $f_k:[0, 1] \to \mathbb{C}$ with $f_k(0) = z^{k-1}$ and $f_k(1) = z^k$ where we define $z^0 = z^m$
        \item The boundary data and wave number are defined.
    \end{enumerate}
    \item Points of the method are computed
    \begin{enumerate}
        \item The locations of the poles are set, distributed along the interior bisectors with exponential clustering at the corners.
        \item The locations of the sample points are set, distributed along the edges with approximately exponential clustering at the corners.
    \end{enumerate}
    \newpage
    \item The problem is solved
    \begin{enumerate}
        \item We form the problem matrix, $A$, which encodes the assumed form of solution, the sample points, and the pole locations.
        \item We form the problem data vector, $b$, by evaluating the boundary data at our sample points.
        \item The vector of coefficients of the terms in our form of solution, $x$, is found via solving the least squares problem $\min \limits_{x} ||Ax-b||^2$
        \item The solution is given by the assumed form of solution with coefficients given by the vector $x$.
    \end{enumerate}
\end{enumerate}

Unlike Trefethen and Gopal, we have not specified here roughly what values the parameters in the above should take. Our code has preset default values for all choices that one could make, but these are intended to be changed. It is written to be a tool for exploring the Lightning Method rather than simply as a solver, and best (or in some cases, any) performance is attained when the right choices for the problem are used.

We start off section 2 with the necessary prerequisites we will need in this dissertation. Then we present the motivation for studying the Helmholtz equation, and explain why piecewise smooth regions with corners should be given attention. Section 3 builds up the theory behind the method, and sheds light on the possible forms of solution we can use. Then in section 4, we illustrate and motivate the journey taken if one were to do a deep dive into optimising the method performance for a particular problem. We give a summary of how to diagnose performance issues, and prescribe how these can be alleviated in the discussion. Our final section is dedicated to testing the algorithm over a range of wave phenomena, and verifying that our solutions act as we expect them to.

Before we begin with the background theory, we present a few examples of the method in action.

\begin{figure}[H]
     \centering
     \begin{subfigure}[t]{0.32\textwidth}
         \centering
         \includegraphics[width=1.2\textwidth]{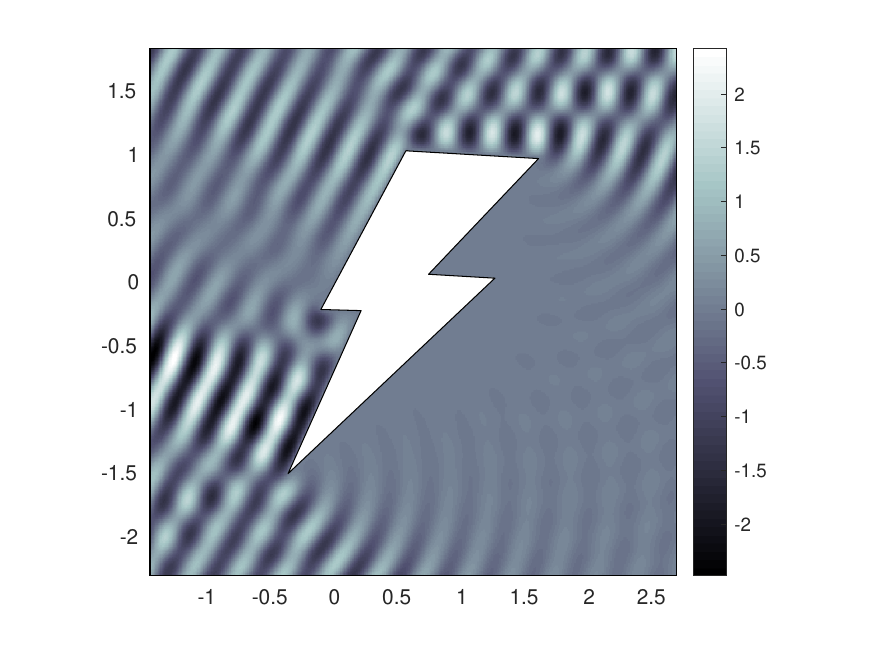}
     \end{subfigure}
     \hfill
     \begin{subfigure}[t]{0.32\textwidth}
         \centering
         \includegraphics[width=1.2\textwidth]{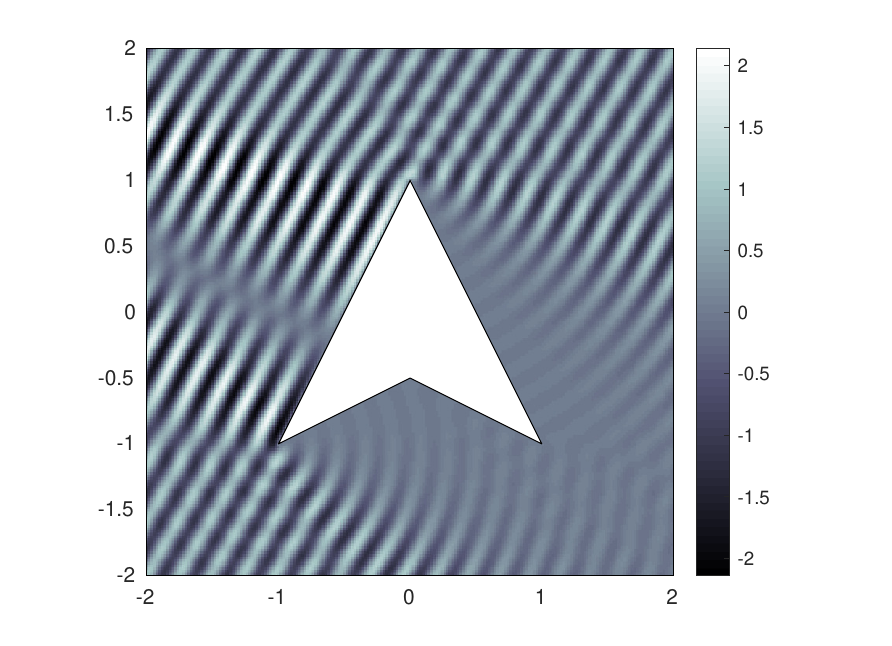}
     \end{subfigure}
     \begin{subfigure}[t]{0.32\textwidth}
         \centering
         \includegraphics[width=1.2\textwidth]{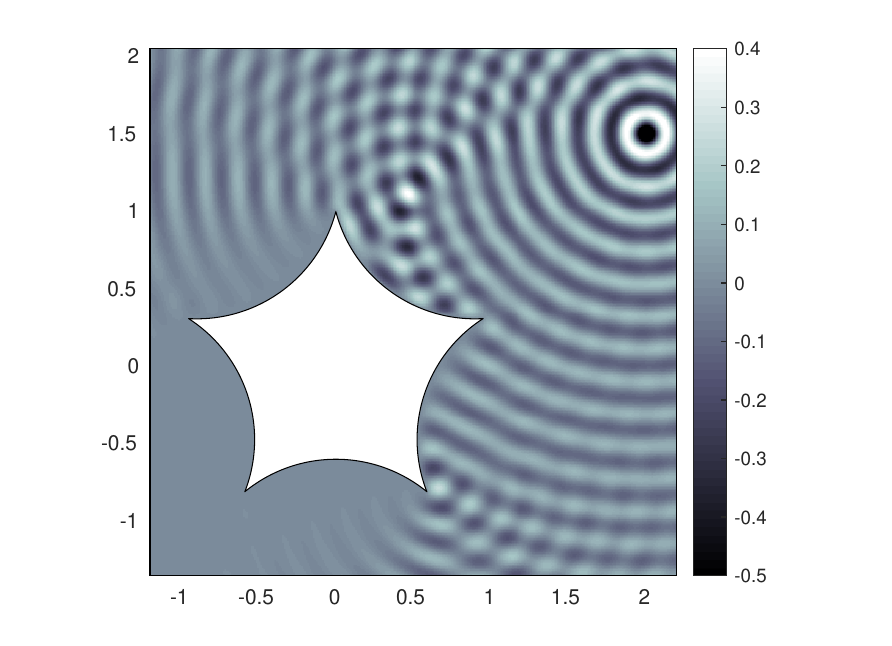}
         \centering
     \end{subfigure}
     \hfill
     \begin{subfigure}[t]{0.32\textwidth}
         \centering
         \includegraphics[width=1.2\textwidth]{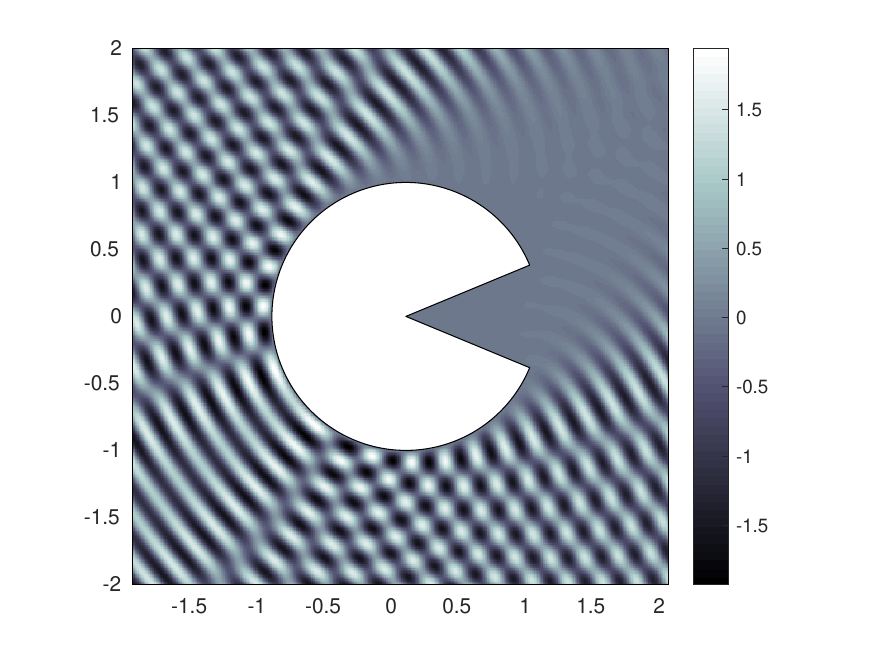}
     \end{subfigure}
     \begin{subfigure}[t]{0.32\textwidth}
         \centering
         \includegraphics[width=1.2\textwidth]{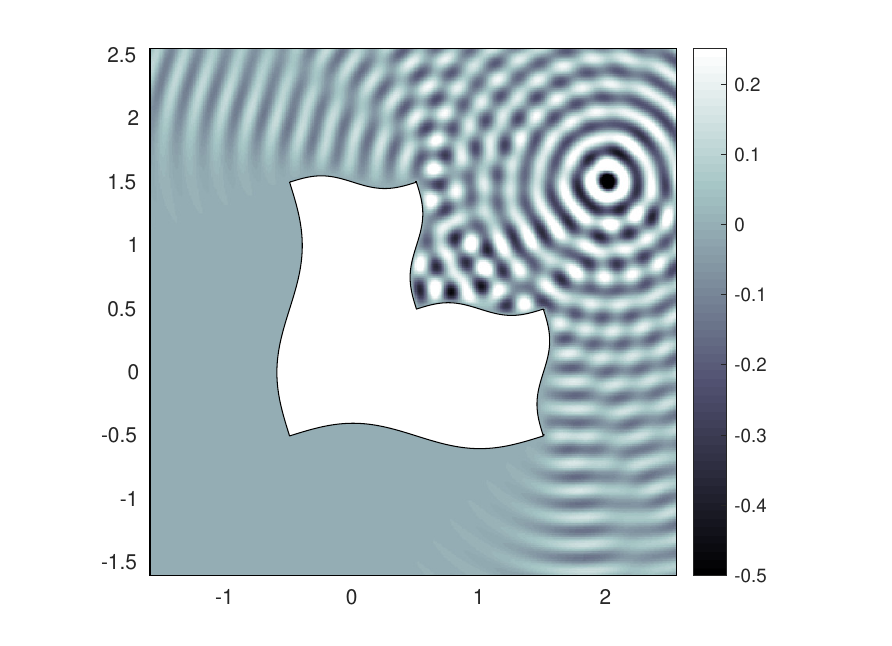}
     \end{subfigure}
        \caption{A selection of examples solutions to the Helmholtz equation around regions with corners}
\end{figure}

\section{The Helmholtz Problem}

\subsection{Notation and Prerequisites}

We start off by introducing some notation, definitions, and conventions.

\begin{convention}
We represent $(x,y) \in \mathbb{R}^2$ using real and imaginary components of a complex variable.
\begin{center}
    $x = \operatorname{Re}(z)$, $y = \operatorname{Im}(z)$, $z \in \mathbb{C}$.
\end{center}
\end{convention}

\begin{definition}[Helmholtz Equation]
The Helmholtz equation is given by the following PDE:
$$\nabla^2 u+k^2 u = 0.$$
\end{definition}

\begin{remark}
This is taken to hold in some region and usually accompanied by Dirichlet, Neumann, or mixed boundary data. We will restrict our focus to Dirichlet boundary data.
\end{remark}

\begin{definition}[Bessel Functions]
The Bessel functions of the first and second kind are linearly independent solutions to the Bessel equation,

$$z^2\frac{\partial^2u}{\partial z^2} + z\frac{\partial u}{\partial z} + (z^2-\alpha^2) = 0$$

and are given respectively by the following \cite{NIST:DLMF}:
$$J_\alpha(z) = \left(\frac{z}{2}\right)^\alpha \sum_{k=0}^{\infty}(-1)^k\frac{\left(\frac{z}{2}\right)^{2k}}{k!\Gamma(\alpha+k+1)},$$
$$Y_\alpha(z) = \frac{J_\alpha(z)\cos(\alpha \pi) - J_{-\alpha}(z)}{\sin(\alpha \pi)},$$
where the limiting value is taken for $Y_\alpha$ when $\alpha$ is an integer. 
\end{definition}

\begin{remark}
These are entire functions of $\alpha$ for fixed $z \ne 0$, although we will only require integer values, for which they simplify down to,
$$J_n = \left(\frac{z}{2}\right)^n \sum_{k=0}^{\infty}(-1)^k\frac{\left(\frac{z}{2}\right)^{2k}}{k!(n+k)!},$$
$$Y_n(z) = \frac{1}{\pi}\frac{\partial J_\alpha(z)}{\partial \alpha}\,\Bigr\rvert_{\alpha = n} + \frac{(-1)^n}{\pi}\frac{\partial J_\alpha(z)}{\partial \alpha}\,\Bigr\rvert_{\alpha=-n}.$$
\end{remark}

\begin{definition}[Hankel Functions]
The Hankel functions of the first and second kind are defined by \cite{NIST:DLMF}
\begin{align*}
    &H_n^{(1)}(z) = J_n(z)+iY_n(z), &H_n^{(2)}(z) = J_n(z)-iY_n(z).
\end{align*}
\end{definition}

\begin{remark}
These also solutions to the Bessel equation by linearity, and are sometimes called the Bessel functions of the third kind. The motivation behind why these particular linear combinations deserves their own notation will be explained later by the Sommerfeld radiation condition.
\end{remark}

\begin{definition}[Dirac Delta Function]
The Dirac delta function is a generalised function satisfying the following properties:
\begin{align*}
    & \delta(x) = 0 \text{ for } x \ne 0,
    & \int_{-\infty}^{\infty} \delta(x) dx = 1.
\end{align*}
\end{definition}

\begin{remark}
Despite the name, no such classical function exists, and it should be treated as a distribution via integrals. It can be thought of as having ``all its mass" centred at the origin.
\end{remark}

\begin{convention}
When looking for time harmonic solutions, we use $e^{-i\omega t}$ as the time evolution component. For a further explanation on this, see remark \ref{time harmonic convention}
\end{convention}

\subsection{The Helmholtz Equation and Applications}
The precise problem of interest that we wish to solve is as follows:
$$
\begin{cases}
    \nabla^2 u+k^2 u = 0 & \text{ in } \mathbb{R}^2 \setminus \Omega\\
    \hfill u = f & \text{ on } \delta \Omega
\end{cases}
$$
where $\Omega$ is a simply connected region with piecewise smooth boundary. The exact meaning of smoothness is beyond the scope of this dissertation, although all our components will be analytic.

This equation is often reached when solving the wave equation, \hbox{$\frac{\partial^2u}{\partial t^2}=c^2\nabla^2u$}, where we look for separable solutions of the form $u(x, y, t) = R(x, y)T(t)$.

\begin{align}
\label{wave equation}
    \begin{split}
    \frac{\partial^2u}{\partial t^2} &= c^2\nabla^2u\\
    \frac{\partial^2}{\partial t^2}R(x,y)T(t) &= c^2\nabla^2R(x,y)T(t) \\
    R(x,y)T''(t) &= c^2T(t)\nabla^2R(x,y) \\
    \frac{T''(t)}{T(t)} &= c^2\frac{\nabla^2R(x,y)}{R(x,y)} = -\omega^2 \\
    \nabla^2R &= -\frac{\omega^2}{c^2}R \\
    \nabla^2R + k^2R &= 0,
    \end{split}
\end{align}

Here we have introduced $\omega$, as each side of the equation must be independent of $t$, $x$, and $y$ (this turns out to be the frequency of the wave), and the wave number, $k=\frac{\omega}{c}$. When we come to plot solutions to the Helmholtz equation, we will see such wave-like behaviour forming. This link to waves gives rise to a wealth of applications, such as electromagnetism, quantum theory, and optics. It is also a commonly-used method for solving hyperbolic PDEs, where an integral transform is used to convert the PDE into the Helmholtz equation, making it easier to solve \cite{enwiki:1069719909}. It was also used by Euler and Lagrange in the 1700s to study vibrating membranes and sound propagation \cite{SCHOT1992385}

\subsection{Choice of Region Type}

We split the space of possible regions into three rough categories. The first of these has a very smooth boundary, and these are the easiest domains over which to solve over. There are fewer opportunities for complications to arise than the other region types we will look at, and standard methods for solving PDEs such as boundary integral equations and the method of fundamental solutions achieve very good convergence over such regions \cite{BARNETT20087003}. The second type has a pathological boundary, with awkward smoothness conditions. While these may be of interest to the pure mathematician, such study is likely not going to be best suited to many real-world scenarios where intricate regularity conditions are rarely relevant.

This leads us to the third type of region, the ones we are interested in solving. This is a middle ground between the other two types, and has a piecewise smooth boundary. In practise regions often have corners, making this region type too restrictive. Examples may include the flow of air inside a cylinder of an engine, or water flowing around a square obstruction. Due to the non-physical nature of the second type of region, we find that the third type of region is often most suited for applications.

\begin{figure}[H]
     \centering
     \begin{subfigure}[t]{0.3\textwidth}
         \centering
         \includegraphics[width=\textwidth]{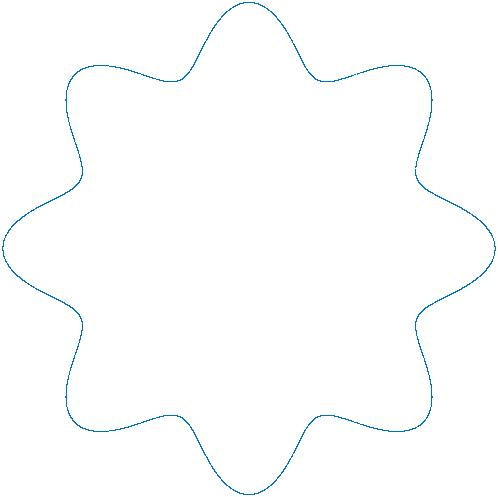}
         \caption{Type 1: smooth}
     \end{subfigure}
     \hfill
     \begin{subfigure}[t]{0.3\textwidth}
         \centering
         \includegraphics[width=\textwidth]{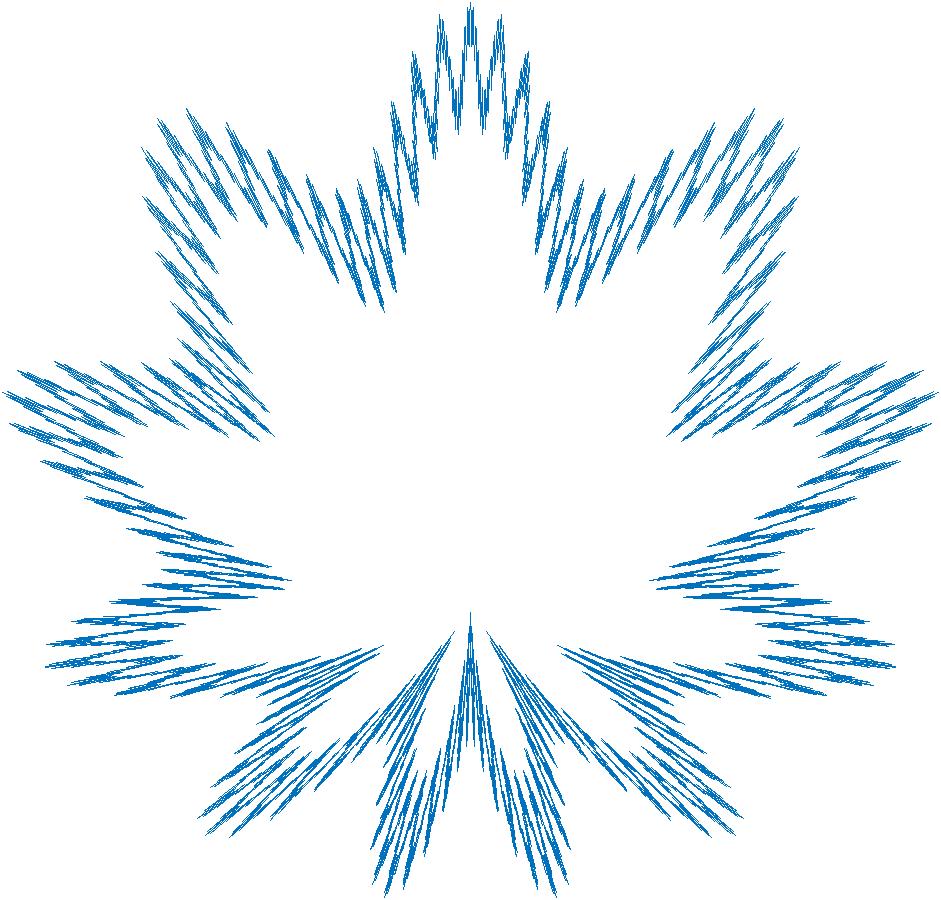}
         \caption{Type 2: not smooth}
     \end{subfigure}
     \hfill
     \begin{subfigure}[t]{0.3\textwidth}
         \centering
         \includegraphics[width=\textwidth]{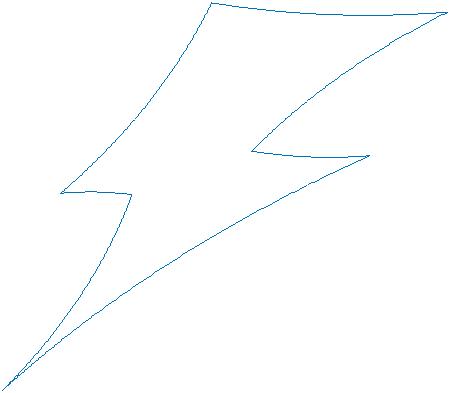}
         \caption{Type 3: piecewise smooth}
     \end{subfigure}
        \caption{The three types of region}
\end{figure}

\section{The Method of Fundamental Solutions}

\subsection{Solutions of the Helmholtz Equation}

\begin{lemma}\label{Solutions of helmholtz}
If $f_n(x)$ satisfies the Bessel equation of order $n$, then $u(z) = f_n(k \left| z \right|)\frac{z^n}{|z|^n}$ satisfies the Helmholtz equation for $z \ne 0$ and wave number $k$, that is, it solves the following equation,
$$\nabla ^2 u+k^2 u=0 \text{, for } z \ne 0$$
\end{lemma}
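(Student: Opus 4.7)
The plan is to convert to polar coordinates, where the angular factor $z^n/|z|^n$ becomes a clean exponential and the Laplacian takes its standard polar form, and then to reduce the resulting ODE in $r$ to the Bessel equation of order $n$ by a change of variable.

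First I would write $z = re^{i\theta}$ with $r = |z| > 0$, so that
\[
    \frac{z^n}{|z|^n} = e^{in\theta}, \qquad u(r,\theta) = f_n(kr)\,e^{in\theta}.
\]
Using the polar form of the Laplacian,
\[
    \nabla^2 u = u_{rr} + \tfrac{1}{r} u_r + \tfrac{1}{r^2} u_{\theta\theta},
\]
I would compute term by term via the chain rule: $u_r = k f_n'(kr) e^{in\theta}$, $u_{rr} = k^2 f_n''(kr) e^{in\theta}$, and $u_{\theta\theta} = -n^2 f_n(kr) e^{in\theta}$. Here the primes on $f_n$ denote differentiation with respect to its scalar argument.

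Next I would collect everything. Adding $k^2 u$ and factoring out $e^{in\theta}$ gives
\[
    \nabla^2 u + k^2 u = e^{in\theta}\left[ k^2 f_n''(kr) + \frac{k}{r} f_n'(kr) - \frac{n^2}{r^2} f_n(kr) + k^2 f_n(kr) \right].
\]
Setting $\xi = kr$ and multiplying the bracket through by $r^2 = \xi^2/k^2$, I would get
\[
    \frac{e^{in\theta}}{r^2}\left[ \xi^2 f_n''(\xi) + \xi f_n'(\xi) + (\xi^2 - n^2) f_n(\xi) \right].
\]
The quantity in square brackets is precisely the left-hand side of the Bessel equation of order $n$ evaluated at $\xi = kr$, which vanishes by hypothesis on $f_n$. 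Hence $\nabla^2 u + k^2 u = 0$ wherever $r \ne 0$, i.e.\ for $z \ne 0$, as required.

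The argument is essentially mechanical; the only places demanding care are the chain-rule factor of $k$ when differentiating $f_n(kr)$ (which produces the matching $k^2$ in front of $f_n''$) and the rearrangement that recognises the bracketed expression as the Bessel operator in the variable $\xi = kr$ rather than $r$. There is no genuine obstacle, only bookkeeping.
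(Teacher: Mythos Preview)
Your proof is correct and follows essentially the same route as the paper: pass to polar coordinates so that $z^n/|z|^n = e^{in\theta}$, apply the polar Laplacian, and substitute $\xi = kr$ (the paper writes $w = kr$) to recognise the Bessel operator of order $n$. The only cosmetic difference is that the paper introduces the rescaled variable at the outset and writes the computation as a chain of equivalences, whereas you compute the derivatives first and substitute afterwards; the content is identical.
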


\begin{proof}
We change to polar coordinates, $z = r e^{i\theta}$, and consider $r \ne 0$. We also introduce the rescaling $w = kr$, so this gives ${u(r e^{i\theta}) = f_n(w)e^{in\theta}}$. 

\begin{align*}
    & \frac{\partial^2u}{\partial x^2} + \frac{\partial^2u}{\partial y^2} + k^2u = 0 \\
    \iff & \frac{\partial^2 u}{\partial r^2} +\frac{1}{r} \frac{\partial^2 u}{\partial r^2} + \frac{1}{r^2} \frac{\partial^2 u}{\partial \theta^2} + k^2u = 0 \\
    \iff & \frac{\partial^2}{\partial w^2}\left(f_n(w)e^{in\theta} \right) \frac{d^2w}{dr^2} + \frac{1}{r}\frac{\partial}{\partial w}\left( f_n(w)e^{in\theta}\right)\frac{dw}{dr} + \frac{i^2n^2}{r^2}f_n(w)e^{in\theta}+k^2f_n(w)e^{in\theta} = 0 \\
    \iff & k^2\frac{d^2f_n}{dw^2}+\frac{k \cdot k}{w} \frac{df_n}{dw} - \frac{k^2n^2}{w^2}f_n + k^2f_n = 0\\
    \iff & w^2\frac{d^2f_n}{dw^2}+w\frac{df_n}{dw}+(w^2-n^2)f_n = 0
\end{align*}
\end{proof}

We are going to use the above lemma to generate solutions to our problem, although as we have seen, the Bessel equation has multiple solutions. Thus to choose which solutions to take, we need an extra condition.

\subsection{Sommerfeld Radiation Condition}
The Sommerfeld radiation condition eliminates solutions that correspond to inward travelling waves on the motivation that these are unphysical. For a wave to be inward travelling, it would require a source point at infinity, or a sink point within a finite distance, and both of these are physically unrealised \cite{SCHOT1992385}. We also note that standing waves can be decomposed into an inward travelling and an outward travelling component, and the Sommerfeld radiation condition also cleanly rules out such possibilities by the linear nature of the Helmholtz equation.

\begin{definition}[Sommerfeld Radiation Condition \cite{Sommerfeld1912}] A solution, to the two dimensional inhomogeneous Helmholtz equation, $u(x)$, is said to be radiating if it satisfies the Sommerfeld radiation condition,
$$
\lim_{|z| \to \infty} \sqrt{|z|} \left(\frac{\partial u(z)}{\partial |z|} -iku(z) \right)=0
$$
uniformly in all directions, $\hat{z}=\frac{z}{|z|}$
\end{definition}

While we do not prove this here, we offer some intuition behind it \cite{390536}. If we take a unit complex number, $e^{i\phi}$, then $u(z)=Ae^{ik\operatorname{Re}\left(z e^{-i\phi}\right)}$ is a plane wave with constant amplitude $A$ travelling at an angle of $\phi$ from the real axis which solves the homogeneous Helmholtz equation, $\nabla^2u+k^2u=0$. Writing $z = |z|e^{i\theta}$, with $\theta$ being the angle from $e^{i\phi}$ to $z$, we have

\begin{gather*}
u(z) = A\exp\left(ik\operatorname{Re}\left(|z|e^{i(\theta-\phi)}\right)\right) = A\exp\left(ik|z|\cos{(\theta-\phi)} \right), \\
\frac{\partial u(z)}{\partial |z|} -iku(z) = (\cos(\theta-\phi)-1)iku(z).
\end{gather*}

Therefore, for this to satisfy the Sommerfeld radiation condition, we need $\theta \to \phi$ sufficiently fast as $|z| \to \infty$ for all $\phi$. In other words, in every direction, $u(z)$ needs to be sufficiently close to a plane wave travelling in that direction, or $0$, and the difference needs to decrease faster than the growth of $\sqrt{|z|}$. We see that any non trivial energy-carrying wave approaching infinity cannot be travelling inwards if it is to satisfy the Sommerfeld radiation condition.

Going back to our original question of choosing solutions to the Helmholtz equation, we note that for $z=re^{i\theta}$, $J_n(kr)e^{\pm in\theta}$ and $Y_n(kr)e^{\pm in\theta}$ do not satisfy the Sommerfeld radiation condition \cite{multipole}. This is what motivates the definition of the Hankel functions however, as they were constructed to represent outward and inward travelling waves, and thus $H_n^{(1)}(z)$ satisfies the condition. When applying lemma \ref{Solutions of helmholtz} with $f_n(z) = H_n^{(1)}(z)$, we get the following solutions:
$$
H_0^{(1)}(kr)\text{, }H_n^{(1)}(kr)e^{-in\theta}\text{, and }H_n^{(1)}(kr)e^{in\theta}\text{, }n \in \mathbb{N}
$$

\begin{remark}{\label{time harmonic convention}}
When solving the wave equation, there is a choice in the time component of the solution. The convention is to use $T(t) = e^{-i\omega t}$, although engineers use the other choice, $T(t) = e^{i\omega t}$. This results in a different sign being used in the Sommerfeld radiation condition, and the direction of the waves that the Hankel functions represent is reversed. The only difference for our purposes is that $H_n^{(2)}$ is used instead of $H_n^{(1)}.$ As stated earlier, we will follow the convention and use $T(t) = e^{-i\omega t}$.
\end{remark}

\subsection{The Method of Fundamental Solutions}
\begin{definition}[Fundamental solution]A fundamental solution to a linear partial differential equation, $\mathfrak{L}u(x)=f(x)$, is a function, $u$, that satisfies,
$$\mathfrak{L}u(x)=\delta(x),$$
where $\delta(x)$ is the Dirac delta function
\end{definition}

For the Helmholtz equation this equation is $\nabla^2 u + k^2u = \delta(z)$. $u(z) = \frac{i}{4} H_0^{(1)}(k|z|)$ satisfies this, and we see that the singular behaviour of $Y_n(x)$ as $x \to 0$ suggests that some multiple of $H_n^{(1)}(kr)e^{\pm in\theta}$ will also be a fundamental solution for each $n$, and this is indeed the case (these are anisotropic solutions) \cite{multipole}. This also gives insight into why they satisfy the Sommerfeld radiation condition - they correspond to waves emanating from a point source.

The method of fundamental solutions \cite{KUPRADZE196482} proposes that we look for a solution of the following form:

$$u(z) = \sum \alpha_i \phi_i(z),$$

where $\phi_i(z)$ are fundamental solutions to the problem being solved, and $\alpha_i$ are constants. The motivation behind this approach comes from the fact that by linearity, $u(z)$ automatically satisfies the PDE being solved, and the problem is reduced to fixing constants so that the boundary data is satisfied. We notice that we can substitute $z-z_j$ in place of $z$ for a given $z_j$, which allows us to change where the poles of our solutions are; if $u(z)$ is a fundamental solution, then $\mathfrak{L}u(z-z_j) = \delta(z-z_j)$. To avoid any unphysical behaviour from these singularities, we have to place the poles outside the domain that we are solving in. We are solving the Helmholtz problem for an exterior domain, so we will be placing the singularities in the interior of our domain.

\subsection{Form of Solution}
Here we give some motivating results and properties that lead us to assume a particular form for our solution from which the Lightning Method will be realised.

We start off with a result from Neumann \cite{Newman1964RationalAT}, where he showed that rational approximation to the function $f(x) = |x|$ could achieve root exponential convergence, that is, $||f(x)-r_n(x)|| = O( \exp(-C \sqrt{n}))$ for some $C>0$. This is in contrast to polynomial approximation which is infamously bad at approximating near singularities, only managing at best linear convergence, $||f(x)-p_n(x)|| = O\left(\frac{1}{n}\right)$. We see that polynomial approximations perform much better on smooth functions than non-smooth functions, and boast exponential, or even super-exponential convergence for analytic functions.

This prompts the use of a rational expansion to handle the corners, and then a polynomial part to get finer detail. This can be thought of as the rational part subtracting off the singular elements of the problem, leaving the smooth part to be handled by the polynomial expansion. The rational and polynomial parts are called the Newman and Runge parts respectively \cite{doi:10.1137/19M125947X}, coming from Newman's insight about using rational functions for corner singularities, and Runge's work with polynomial approximation for smooth functions. As we need our functions to satisfy the Helmholtz equation, we will work by analogy and use the fundamental solutions we mentioned above, giving the following form:

\begin{equation}\label{Form of solution extended runge}
\sum_{j=1}^{N_1} \left(a_jH_0^{(1)}(k|z_j|) + b_jH_1^{(1)}(k|z_j|) \frac{z_j}{|z_j|} \right) + \sum_{n=-N_2}^{N_2}c_nH_{|n|}(k|z_*|) \frac{z_*^n}{|z_*^n|},
\end{equation}

where $N_1$ is the size of the Newman part, $N_2$ is the size of the Runge part, $\{z_j\}$ are poles, and $z_*$ is a fixed internal point. It turns out to be the case that the negative indices in the second sum are not advantageous, and we will use the same form as Trefethen and Gopal in our analysis, given explicitly below.

\begin{equation}\label{Form of solution}
\sum_{j=1}^{N_1} \left(a_jH_0^{(1)}(k|z_j|) + b_jH_1^{(1)}(k|z_j|) \frac{z_j}{|z_j|} \right) + \sum_{n=0}^{N_2}c_nH_{|n|}(k|z_*|) \frac{z_*^n}{|z_*^n|},
\end{equation}

In general, for given $N_1$ and $N_2$, finding optimal values for $a_j$, $b_j$, $c_n$, $z_j$, and $z_*$ is a non-linear problem, and thus difficult to solve in general. This leads us to the subject of this dissertation, \emph{The Lightning Method}.

\subsection{The Lightning Method}
The key feature of the Lightning Method is that a particular distribution of poles gives us the same root exponential convergence that Newman found, and the precise placement of these poles is not as delicate as one might initially assume. This allows us to place the poles \emph{a priori}, and thus reduce the problem to an over determined linear system that can quickly be solved as a least squares problem. Such problems are solved routinely, and are far easier than the problem we started out with. The key property of the distribution of the poles is that they are exponentially clustered at the corners. Put explicitly, for a sequence of $n$ poles, $z_j^k$, associated with a particular corner, $z^k$, we have $|z_j^k-z^k| = Ae^{-\frac{Bj}{\sqrt{n}}}$ for some constant method parameters, $A$ and $B$, with $j=0,1, \cdots, n-1$.

As we have Dirichlet boundary data, we choose our sample points on the boundary. Trefethen and Gopal found for the Laplace problem that these points need to be exponentially clustered at the corners, and that around three times as many points as you have real degrees of freedom in the expansion are needed to achieve good convergence \cite{doi:10.1073/pnas.1904139116}. The selection of sample points on the boundary is a key area that we explore later on.

We quote two theorems from Trefethen and Gopal using rational functions and polynomials for the Laplace problem to demonstrate the theoretical support for these ideas \cite{doi:10.1073/pnas.1904139116}. The key goal of these theorems is to establish how exponential convergence of poles leads to root exponential error, the property we take advantage of in the Lightning Method. We also note that despite the lack of comprehensive theory for the Laplace problem, and even less theory behind the Lightning Method for the Helmholtz problem, good convergence for both have been observed in practise. Proofs for both theorems can be found in \cite{doi:10.1073/pnas.1904139116}.

We start by defining some terms. A split disk is a region given by
$$A_{\theta} = \{z \in \mathbb{C}: |z|<1, -\theta < \arg z < \theta\}.$$
$||\cdot||_{\Omega}$ denotes the supremum norm over a set, $\Omega$, and $\alpha \Omega = \{\alpha z: z \in \Omega\}$ where $\alpha \in \mathbb{C}$ is a constant.

\begin{theorem}[Convergence for a wedge]
Let f be a bounded analytic function in the slit disk $A_{\pi}$ that satisfies $f(z) = O(|z|^{\delta})$ as $z \to 0$ for some $\delta > 0$, and let $\theta \in (0, \frac{\pi}{2})$ be fixed. Then for some $\rho \in (0, 1)$ depending on $\theta$ but not $f$, there exists type $(n-1, n)$ rational functions ${r_n}$, $1\ge n < \infty$, such that
$$||f-r_n||_{\Omega} = O\left(e^{-C\sqrt{t}}\right)$$
as $n \to \infty$ for some $C>0$, where $\omega = \rho A_{\theta}$. Moreover, each $r_n$ can be taken to have simple poles only at
$$\beta_j = -e^{-\sigma j / \sqrt{n}}, \quad 0 \ge j \ge n-1,$$
where $\sigma > 0$ is arbitrary \cite{doi:10.1137/19M125947X}.
\end{theorem}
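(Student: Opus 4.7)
The plan is to represent $f$ by Cauchy's integral formula around the slit disk and then discretize that integral by a quadrature rule whose nodes are exponentially clustered at the corner, so that the discretization automatically yields a rational function with the prescribed poles. Fix $\rho \in (0,1)$ small enough that $\rho A_\theta$ is bounded away from the slit $(-1,0]$ (possible because $\theta < \pi/2$), and choose a contour $\Gamma \subset A_\pi$ enclosing $\rho A_\theta$ that hugs both sides of the slit. For $z \in \rho A_\theta$,
\[
f(z) \;=\; \frac{1}{2\pi i}\oint_\Gamma \frac{f(w)}{w-z}\,dw.
\]

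The next step is the change of variables $w = -e^{-s}$ on the two pieces of $\Gamma$ lying near the slit. Under this map, uniform nodes $s_j = \sigma j/\sqrt{n}$, $0 \le j \le n-1$, correspond exactly to the prescribed cluster $\beta_j = -e^{-\sigma j/\sqrt{n}}$ in $w$-space; moreover the piece of $\Gamma$ far from $0$ contributes a uniformly analytic remainder that can be absorbed into a finite number of extra nodes (or analyzed by ordinary smooth quadrature). After truncating to $s \in [0,L]$ and applying a midpoint or trapezoidal rule in $s$ with $n$ nodes, the discretized integral becomes
\[
r_n(z) \;=\; \sum_{j=0}^{n-1} \frac{c_j(z\text{-independent})}{\beta_j - z},
\]
which is a type $(n-1,n)$ rational function with poles exactly at $\beta_j$.

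The error splits into a truncation piece and a quadrature piece. Truncation beyond $s = L$ is controlled by the hypothesis $|f(w)| = O(|w|^\delta) = O(e^{-\delta s})$ together with the fact that $|w-z|$ stays bounded below for $z \in \rho A_\theta$, giving $O(e^{-\delta L})$. The quadrature error is controlled by the Paley--Wiener-type estimate for trapezoidal/midpoint rules on analytic integrands: provided the integrand $s \mapsto f(-e^{-s})/(-e^{-s}-z)$ extends analytically in $s$ to a horizontal strip of definite width $a > 0$, the error is $O(e^{-2\pi a n / L})$. Balancing $e^{-\delta L} \sim e^{-2\pi a n / L}$ forces $L \sim \sqrt{n}$, yielding the claimed bound $O(e^{-C\sqrt{n}})$ uniformly for $z \in \rho A_\theta$. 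The parameter $\sigma$ can be chosen arbitrarily by rescaling $s$, affecting only the constant $C$.

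The main obstacle is the second error estimate: one must verify that the map $s \mapsto -e^{-s}$ sends a full strip $\{|\operatorname{Im} s|<a\}$ into the slit disk $A_\pi$ while keeping its image uniformly separated from $\rho A_\theta$. This is precisely where the assumption $\theta < \pi/2$ is essential — it guarantees $\rho A_\theta$ lies in the right half-plane direction away from the slit, so a strip of positive width in $s$ corresponds to a sector in $w$ that avoids both the branch cut along $(-1,0]$ and the approximation set $\rho A_\theta$. Once this geometric fact is established, the quadrature error estimate follows from standard contour-deformation arguments, and the remaining bookkeeping (choice of constants, handling the bounded far part of $\Gamma$, extracting the coefficients $c_j$) is routine.
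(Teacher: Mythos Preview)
The paper you are working from does not actually prove this theorem: it is quoted from Gopal and Trefethen and the reader is referred to \cite{doi:10.1073/pnas.1904139116} (and \cite{doi:10.1137/19M125947X}) for the argument. So there is no in-paper proof to compare against.

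That said, your proposal is essentially the proof given by Gopal and Trefethen in the cited source. They too represent $f$ by a Cauchy/Hermite-type contour integral around the slit, perform the exponential substitution $w=-e^{-s}$ so that the integrand decays like $e^{-\delta s}$ near the corner, apply the trapezoidal rule in $s$, and balance the $O(e^{-\delta L})$ truncation error against the $O(e^{-c n/L})$ quadrature error to obtain $L\sim\sqrt{n}$ and hence $O(e^{-C\sqrt{n}})$ overall. The observation that $\theta<\pi/2$ is what guarantees a strip of analyticity in $s$ separated from $\rho A_\theta$ is exactly the geometric point they isolate. Your sketch is on target; the only places that need care when filling in details are the treatment of the portion of $\Gamma$ away from the slit (Gopal--Trefethen handle this by choosing the contour so this piece contributes an exponentially small remainder rather than extra poles) and making the strip-width constant $a$ explicit, but neither changes the structure of the argument.
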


\begin{remark}
The proof is only given for $\theta < \frac{\pi}{2}$, but the result is believed to hold for $\theta < \pi$ with a more complicated choice of sample point placement for non concave corners. Trefethen and Gopal suggest placing interpolation points on the sides of the bisector, and this is what we do in our code, and what Trefethen and Gopal do in their code \cite{helmcode, doi:10.1137/19M125947X}. 
\end{remark}

\begin{theorem}[Convergence for a convex polygon ]
Let $\Omega$ be a convex polygon with corners $w_1, \dotsc, w_m$, and let $f$ be an analytic function in $\Omega$ that is analytic on the interior of each side segment and can be analytically continued to a disk near each $w_k$ with a slit along the exterior bisector there. Assume f satisfies $f(z)-f(w_k) = O\left(|z-w_k|^{\delta} \right)$ as $z \to w_k$ for each $k$ for some $\delta > 0$. There exist degree $n$ rational functions $\{r_n\}$, $1 \ge n < \infty$, such that
$$||f-r_n||_{\Omega} = O \left(e^{-C\sqrt{n}} \right)$$
as $n \to \infty$ for some $C>0$. Moreover, each $r_n$ can be taken to have finite poles only at points exponentially clustered along the exterior bisectors at the corners, with arbitrary clustering parameter $\sigma$ as in the previous theorem, as long as the number of poles near each $w_k$ grows at least in proportion to $n$ as $n \to \infty$ \cite{doi:10.1137/19M125947X}.
\end{theorem}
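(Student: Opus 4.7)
The plan is to reduce the convex-polygon problem to $m$ copies of the wedge problem by decomposing $f$ into local corner singularities plus a globally smooth remainder. Introduce smooth cut-offs $\chi_k$ supported near each $w_k$ (or, equivalently, use a Cauchy-integral splitting along small arcs separating the corners from one another) to write
$$f = \sum_{k=1}^{m} F_k + H,$$
where each $F_k$ is bounded and analytic on $\Omega$, carries the same corner-type singularity at $w_k$ (vanishing like $|z-w_k|^{\delta}$), extends analytically to a disk around $w_k$ slit along the exterior bisector there, and is analytic in a full neighbourhood of every other corner; meanwhile $H$ is analytic on an open neighbourhood of $\overline{\Omega}$.

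For each $F_k$, apply an affine change of variables (translation to $w_k$, rotation so the exterior bisector aligns with the negative real axis, and scaling) and invoke the previous wedge theorem. The convexity of $\Omega$ ensures that the interior angle at $w_k$ is less than $\pi$, so the relevant half-angle $\theta_k$ lies in $(0, \pi/2)$ and the hypothesis of that theorem is met. This furnishes a rational approximant $r_{n_k}^{(k)}$ of degree $n_k$ with simple poles exponentially clustered along the exterior bisector at $w_k$, satisfying $\|F_k - r_{n_k}^{(k)}\|_{\Omega} = O(\exp(-C_k \sqrt{n_k}))$ after undoing the affine map.

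The smooth remainder $H$ is handled by standard polynomial approximation: since $H$ is analytic on a neighbourhood of the compact convex set $\overline{\Omega}$, Bernstein--Walsh theory provides polynomials $p_N$ with $\|H - p_N\|_{\Omega} = O(\rho^N)$ for some $\rho \in (0,1)$, a rate that comfortably beats any root-exponential bound. Setting $r_n = \sum_k r_{n_k}^{(k)} + p_N$ with balanced allocations $n_k = \lfloor n/m \rfloor$ and $N$ chosen proportional to $\sqrt{n}$ produces a rational function of total degree $O(n)$ (renormalisable to degree $n$) whose supremum error is controlled by $\max_k O(\exp(-C_k\sqrt{n_k})) = O(\exp(-C\sqrt{n}))$ for $C = \min_k C_k/\sqrt{m}$.

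The main obstacle is the corner decomposition itself: one must produce the $F_k$ so that each simultaneously (i) inherits the prescribed $\delta$-Hölder vanishing at its own corner, (ii) extends analytically across the slit disk there, and (iii) is analytic at every other corner, while $H$ really does extend across $\partial \Omega$. Convexity is essential here because it guarantees that the exterior bisectors at the corners point into the complement of $\Omega$, so the branch cuts of the $F_k$ sit outside $\overline{\Omega}$ and never meet the approximation region. A secondary bookkeeping point is matching the degree counts so that the single Runge-type polynomial and the $m$ Newman-type corner pieces can be packaged into a rational function of degree $n$ without degrading the constant $C$.
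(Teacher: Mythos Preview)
The paper does not actually prove this theorem: immediately before stating it, the author writes ``Proofs for both theorems can be found in \cite{doi:10.1073/pnas.1904139116},'' and no argument is supplied in the paper itself. So there is no in-paper proof to compare against; your sketch should be measured against the Gopal--Trefethen original.

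On that score your outline is essentially the Gopal--Trefethen argument: decompose $f$ into $m$ corner pieces plus a globally analytic remainder, hit each corner piece with the wedge theorem, and finish the remainder with polynomial (Runge/Bernstein--Walsh) approximation, then add and balance degrees. Two points deserve tightening. First, smooth cut-offs $\chi_k$ do \emph{not} work here, because multiplying by a non-analytic bump destroys analyticity and the wedge theorem then fails to apply; the Cauchy-integral splitting you mention parenthetically is not an alternative but the mechanism that actually makes the decomposition analytic, and you should lead with it. Second, the wedge theorem as stated only controls $\|F_k - r^{(k)}_{n_k}\|$ on a scaled wedge $\rho A_{\theta_k}$ near $w_k$, not on all of $\Omega$. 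The reason the bound extends to $\Omega$ is that the Cauchy-integral piece $F_k$ is analytic on the complement of a short arc near $w_k$, hence on an arbitrarily large slit disk centred at $w_k$; after rescaling, the wedge in the theorem can be taken to contain all of $\overline{\Omega}$. Your write-up glosses over this, and it is exactly where convexity and the Cauchy construction are jointly used.
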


\begin{remark}
As with the previous theorem, this is also thought to apply for corners with interior angle greater than $\pi$, although this requires an even more careful choice of sample points. A proper handling of this requires potential theory and leads us beyond the scope of this dissertation. We will be content to just use sample points on the boundary that cluster at the corners.
\end{remark}

\section{Method Implementation Choices}

There are several choices to be made in the implementation of this method, some of which need to be chosen more carefully than others. In this section, we explore areas which need particular focus, and how choices should be made for optimal performance. During preliminary testing, we found that the most attention is needed on the Newman part and the distribution of sample points. Therefore, in this dissertation we will be keeping the length of the Runge part at $N_2=20$, and the length of the line of poles at 80\% of the distance along the interior bisector from each corner to the opposite side of the region. We have found these to be reasonable choices, but the exact choices are unimportant and mostly arbitrary. We do not explore having different numbers of poles at different corners, or changing the fixed internal point from the Runge part of the series.

Understanding these choices is crucial for good performance, and some problems will need careful attention in order to get any convergence at all. An analysis of these choices will often need to be done for each new problem considered, depending on the desired accuracy. In general, a universal set of choices will not give good results, and we will present how each problem should be analysed in the discussion. This section should not be thought of as a practical guide on method choices, but rather an exploration of the background material to such a guide.

For the purpose of simplicity, we will focus on one particular example problem: the unit square. This simple region has the corners we desire, and is the same choice made by Trefethen and Gopal in their Lightning Helmholtz code \cite{helmcode}. We will also use similar problem choices: a wave number of 20, and boundary data given by a plane wave, $\exp(-i\operatorname{Re}(20z\exp(-\frac{5\pi i}{6})))$.

We show this region in figure \ref{Square demonstration} below. Figure \ref{poles and sample points} has the poles and sample points included, marked with red dots and blue circles respectively (parameters modified for illustration purposes). In \ref{square example solution}, we have a plot of what the solution should look like, and we see the expected behaviour, such as wave scattering, diffusion, and the shadow region behind the square. Throughout this dissertation we will suppress showing the poles and sample points when showing the solution.

\begin{figure}[H]
     \centering
     \begin{subfigure}[t]{0.49\textwidth}
         \centering
         \includegraphics[width=\textwidth]{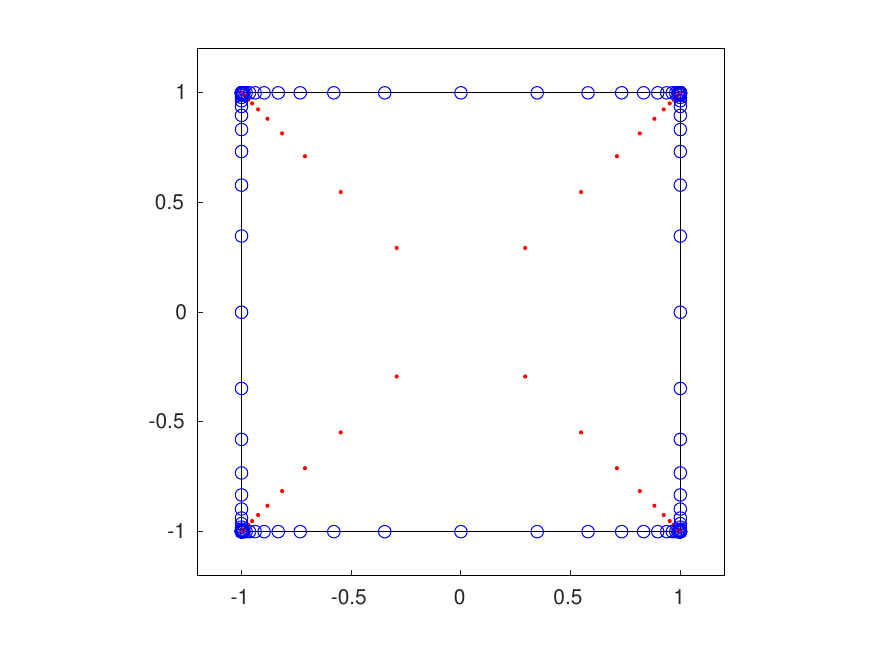}
         \caption{Poles and sample points}
         \label{poles and sample points}
     \end{subfigure}
     \begin{subfigure}[t]{0.49\textwidth}
         \centering
         \includegraphics[width=\textwidth]{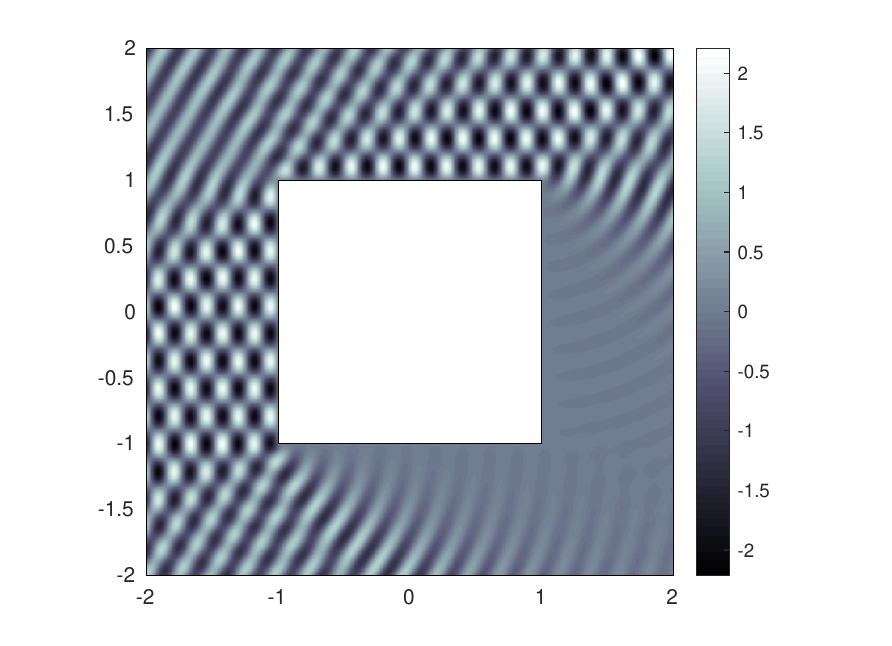}
         \caption{An example solution}
         \label{square example solution}
     \end{subfigure}
     \hfill
        \caption{A demonstration of the poles and sample points, and the solution}
        \label{Square demonstration}
\end{figure}

\subsection{Parameter and Distribution Notation}

We will denote the number of poles by $p$, and a distribution parameter by $p_r$. The distribution is given by,

$$|p_j - c_k| = \text{const}\exp(-p_r\frac{j}{\sqrt{n}})$$

Where $j = 0,1,\cdots, n-1$ indexes the poles, and $c_k$ is a corner.

The distribution of sample points is given by a strictly increasing continuous function $f(t)$ with $f(0) = 0$ and $f(1) = 1$, where 0 represents the corner, and 1 represents halfway along an edge. We use $s$ to represent the number of sample points approaching the corner from each direction (that is, $2s$ sample points per corner in total). We will be using the form $f(t) = t^A e^{4(t-1)}$, and for reference, the distribution used by Trefethen and Gopal would be $f(t) = e^{4\sqrt{n}(t-1)}$ \cite{helmcode}.

\subsection{Order of Analysis}

We first observe that the number of sample points and their rate of convergence to the corners is critical, and needs to be sufficiently high. This is to be expected, as fewer sample points means a lower resolution and less data, and as the corners are not smooth, they will need a far higher resolution. We demonstrate the issues present when these are not sufficiently high by using $A = 1$ and $s = 50$ respectively, with $p = 50$ and $p_r = 2.5$.

\begin{figure}[H]
     \centering
     \begin{subfigure}[t]{0.49\textwidth}
         \centering
         \includegraphics[width=\textwidth]{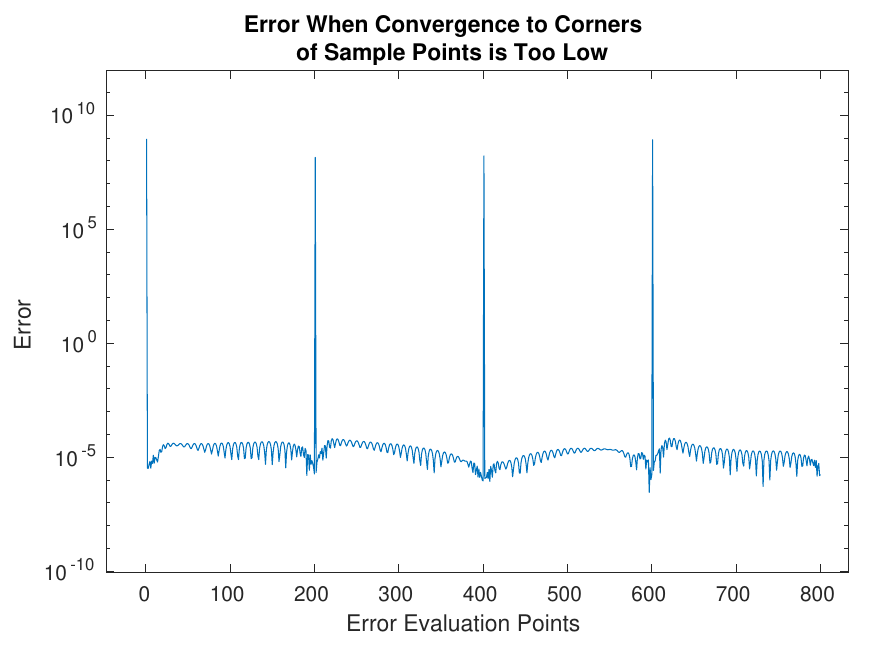}
     \end{subfigure}
     \hfill
     \begin{subfigure}[t]{0.49\textwidth}
         \centering
         \includegraphics[width=\textwidth]{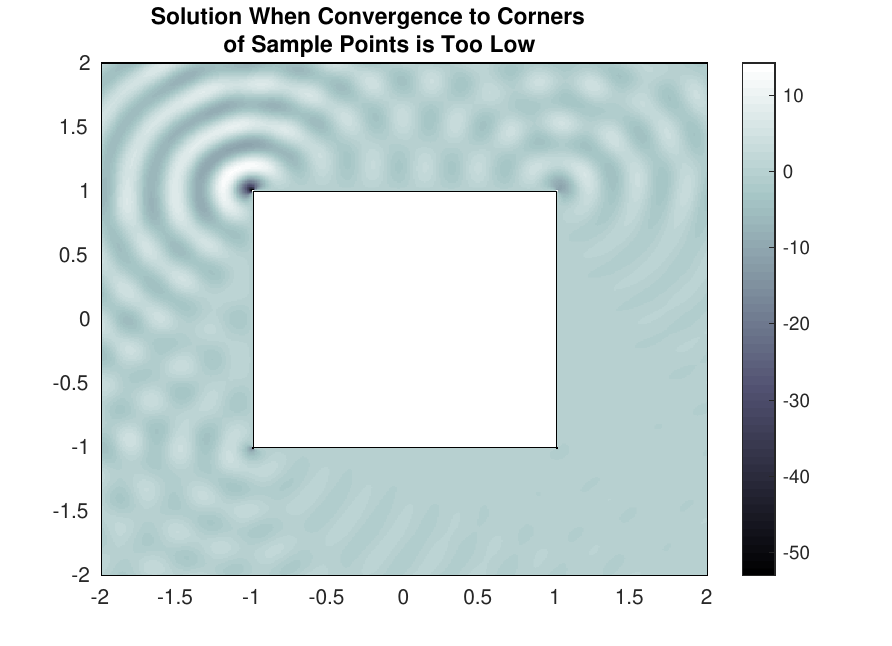}
     \end{subfigure}
     \begin{subfigure}[t]{0.49\textwidth}
         \centering
         \includegraphics[width=\textwidth]{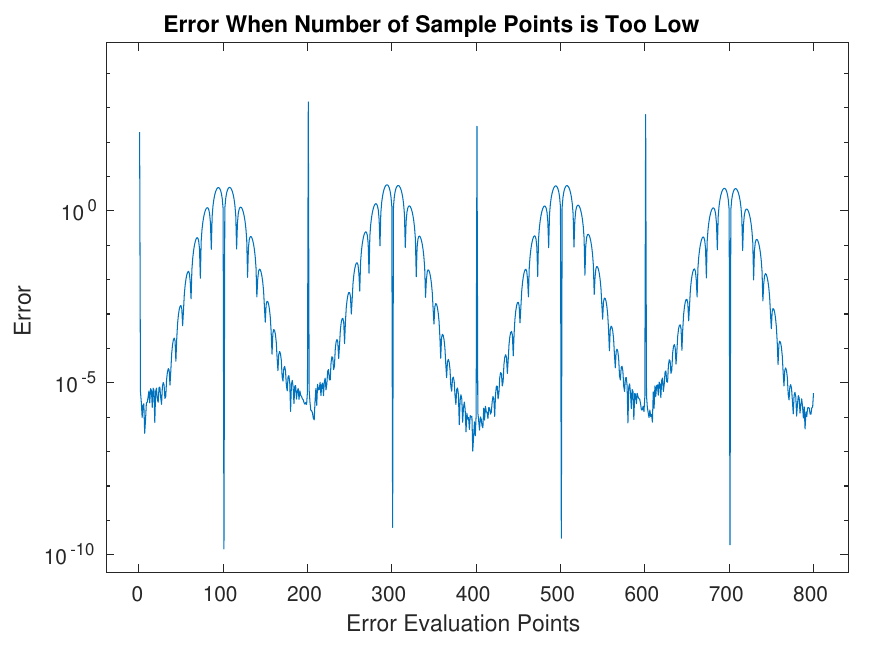}
     \end{subfigure}
     \hfill
     \begin{subfigure}[t]{0.49\textwidth}
         \centering
         \includegraphics[width=\textwidth]{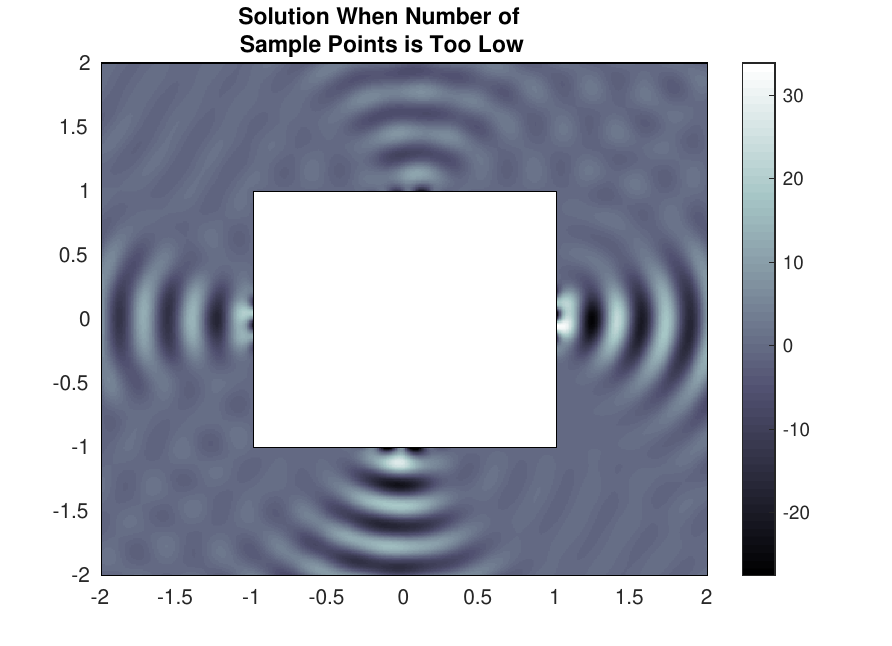}
     \end{subfigure}
        \caption{We do not get convergence to the solution when there are insufficient sample points, or they do not converge to the corners fast enough}
        \label{bad sample points}
\end{figure}

The spikes at 0, 200, 400, and 600 correspond to the corners, and the other error points are distributed evenly between them. We see that slow convergence to the corners has exceptionally bad performance, with errors up to 1 billion, and the solution is completely removed from our desired outcome. In the second situation we see that the sample points are spread too thin in the middle, and therefore we do not have any convergence here. This is manifested in the plot of the solution by fans radiating out of the midpoints, a feature we have found to be indicative of too few sample points in the centre. Despite sacrificing the middle of the edges, we still do not have a high enough resolution at the corners, highlighting how important the corners are.

Intuitively, having more sample points leads to a lower error norm, and we will see this is indeed the case (up to a point), thus we can use an excess of sample points and be confident that they are not a limiting factor in our analysis of other variables. It is also the case that the convergence rate to the corners of the sample points can be set very high without significant negative consequences.

\begin{figure}[H]
     \centering
     \includegraphics[width=\textwidth]{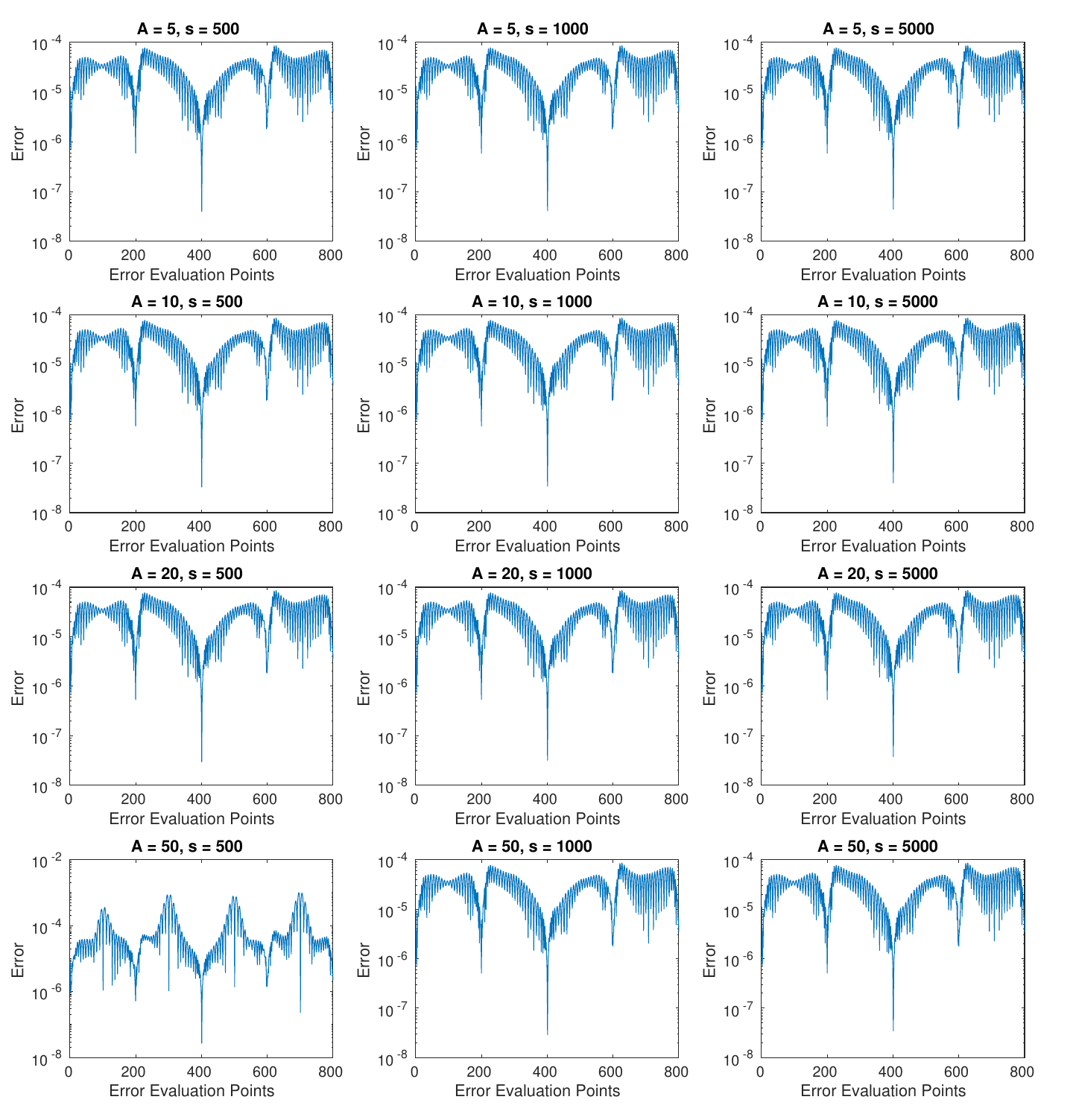}
     \caption{Choosing very high values of $A$ and $s$ still have good error profiles, with the greatest error less than $10^{-2}$}
\end{figure}

The above figure demonstrates that the method is well behaved even when using large parameter choices. We note the $A = 50$ and $s = 500$ plot is starting to suffer from a lack of sample points, which shows these two parameters need to be compatible with each other. We can use this fact to our advantage, as it allows us to eliminate issues due to sample points from our analysis of other variables. Thus, we can consider the poles and sample points in sequence rather than together, greatly simplifying and speeding up our computations. We ensure that the convergence is sufficiently fast not to interfere with any other experiments by choosing $A = 4$ and $s = 500$.

\subsection{Distribution of Poles}

We need to be more careful when considering the distribution of poles than we were with our initial analysis of the distribution of sample points. As we are introducing poles very close to the corners, we suspect that this could lead to problems due to the singular nature of the poles and the finite precision of the computations. We confirm this by increasing the number of poles until we have problems, and plotting the resulting error profiles.

\begin{figure}[H]
     \centering
     \includegraphics[width=\textwidth]{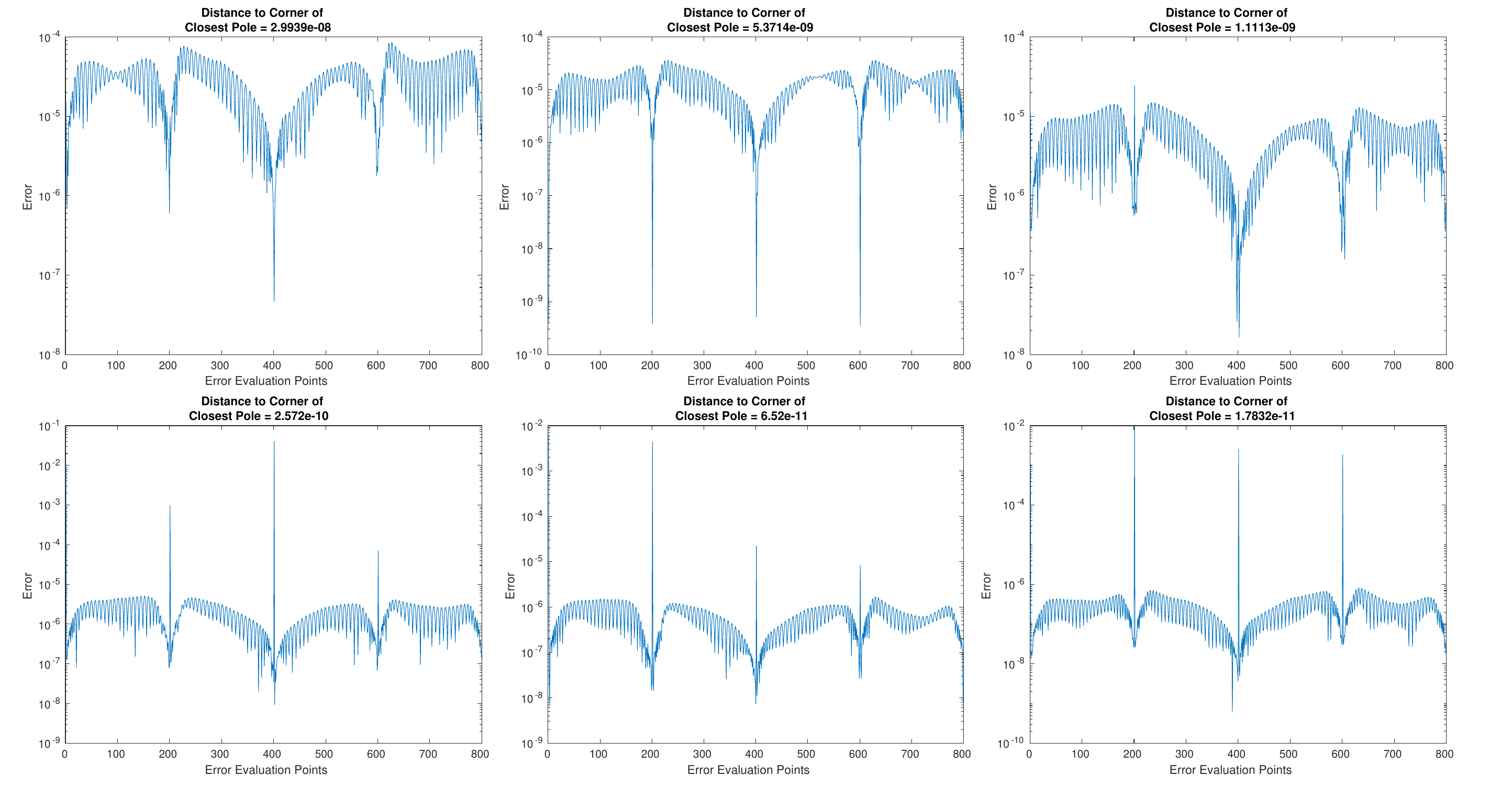}
     \caption{Relationship between distance to the corner and shape of the error profile}
\end{figure}

We can clearly see that when the poles get too close we have very poor behaviour at the corners compared to the rest of the error evaluation points. There appears to be a turning point in the behaviour around $10^{-9}$, which we confirm in figure \ref{minimum distance vs error}.

\begin{figure}[H]
     \centering
     \includegraphics[width=\textwidth]{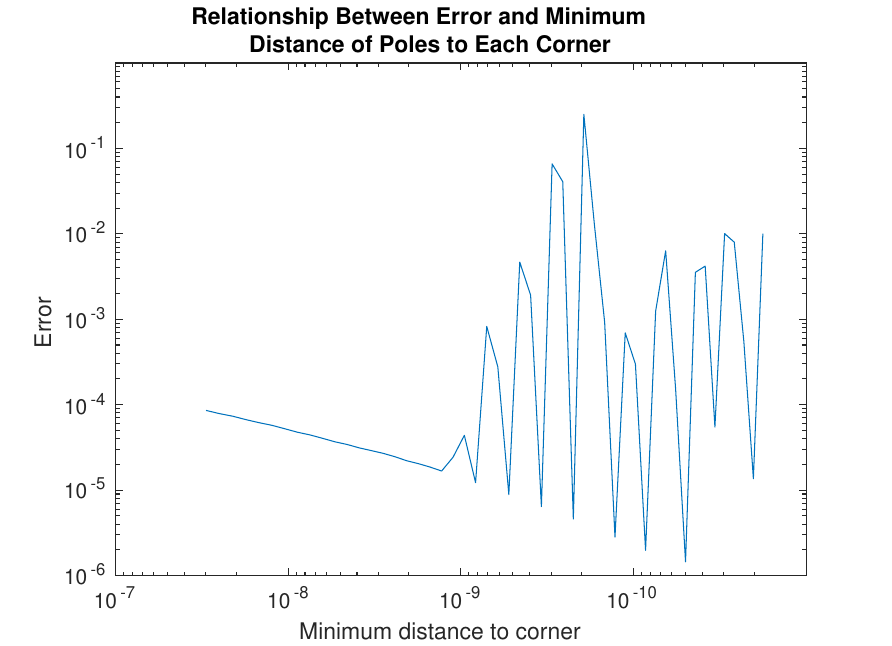}
     \caption{Relationship between distance to the corner and shape of the error profile}
     \label{minimum distance vs error}
\end{figure}

We attain the rather surprising result that after $10^{-9}$, there are semi-regular breaks in the poor behaviour where the error follows the trajectory it was on beforehand. We will not explore this further in this dissertation, although if one could understand the precise cause of the brief returns to low errors, this could perhaps be used to gain another digit of accuracy. We echo the comments of Trefethen and Gopal that improvement can be found in better choices of pole placement \cite{doi:10.1073/pnas.1904139116}. We have a limiter in our code that removes any poles closer than $10^{-9}$ to their corners which means adding additional poles past this point will not lower the error.

Now that we are safe from being too close to the corner we explore the rate of convergence to the corners of the poles. We expect that very slow or very fast convergence will underperform, meaning there should be a middle ground. We plot their error profiles in figure \ref{pole convergence rate error profiles} to see what can go wrong if the rate is chosen poorly.

\begin{figure}[H]
     \centering
     \includegraphics[width=\textwidth]{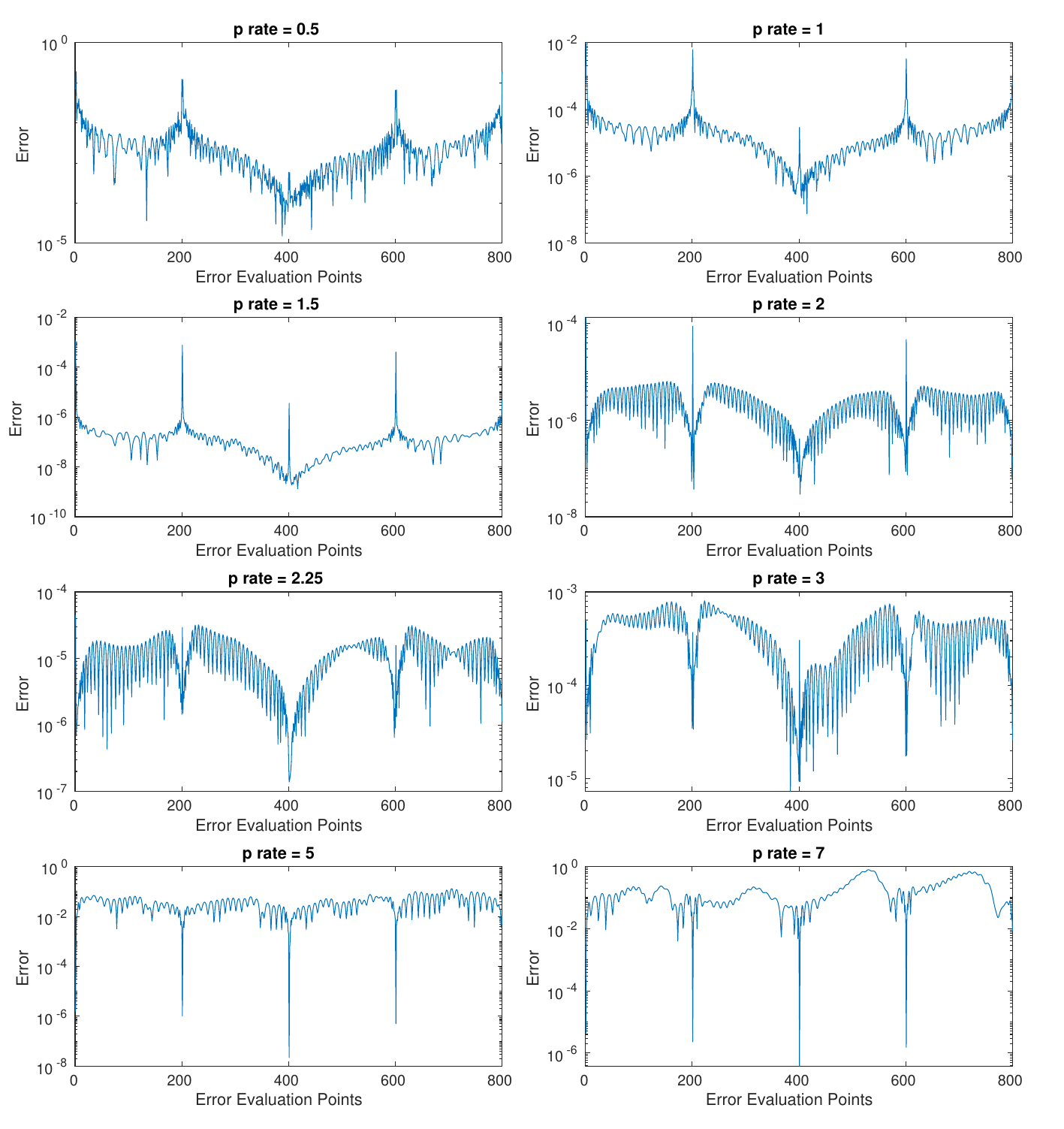}
     \caption{Relationship between pole convergence rate and shape of the error profile}
     \label{pole convergence rate error profiles}
\end{figure}

For low rates, the corners have not been given enough attention, and they have large spikes in error. While the error on the rest of the boundary is far better than the corners, it is still considerably lower in the $p_r = 0.5$ plot than the others. Looking at the higher rates, it is clear that more poles are needed near the centre of the region as the error is poor everywhere but the corners. Interestingly, the error at the corner is no smaller for large $p_r$ than when $p_r = 2.25$. From these plots we deduce that a uniform error on the boundary leads to the lowest error in the infinity norm. We perform a higher resolution analysis of the error in the infinity norm to see the relationship with pole convergence rate more clearly, and show our results in figure \ref{best choice of convergence rate}.

\begin{figure}[H]
     \centering
     \includegraphics[width=0.9\textwidth]{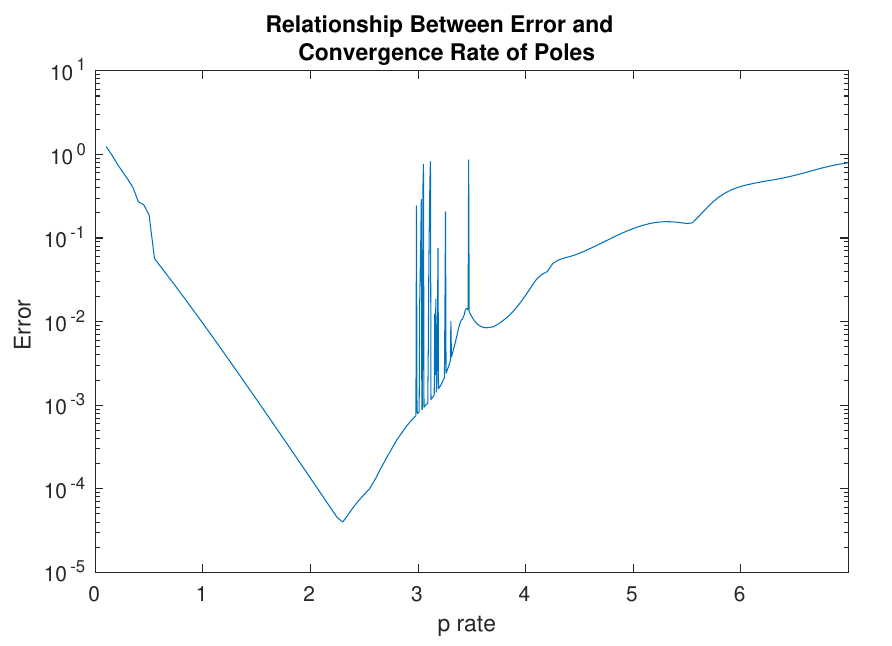}
     \caption{There is a clear choice of best convergence rate to use}
     \label{best choice of convergence rate}
\end{figure}

\begin{figure}[H]
     \centering
     \includegraphics[width=0.9\textwidth]{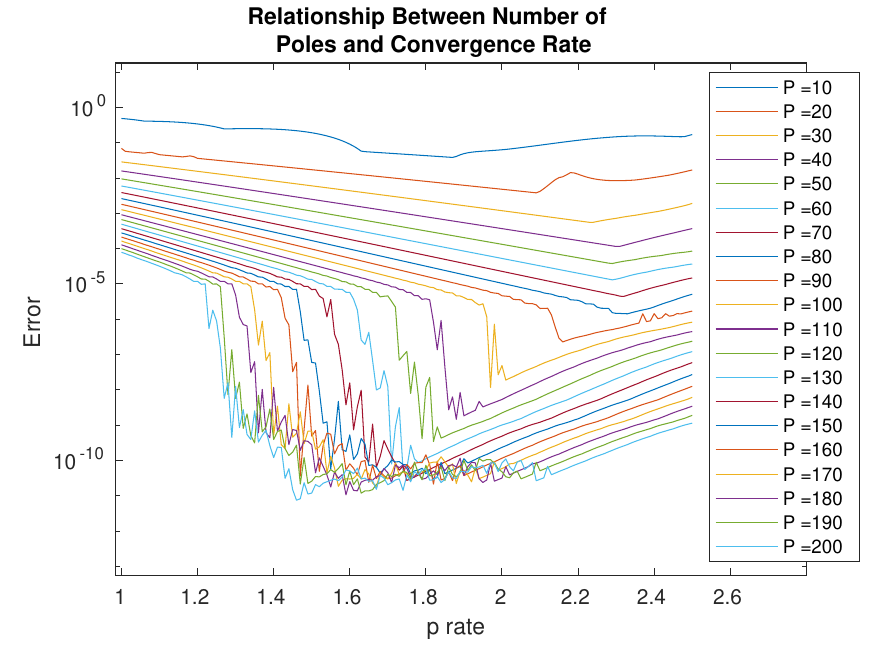}
     \caption{The relationship between error, pole convergence rate, and number of poles}
     \label{poles vs pole rate}
\end{figure}

\begin{figure}[H]
     \centering
     \includegraphics[width=0.9\textwidth]{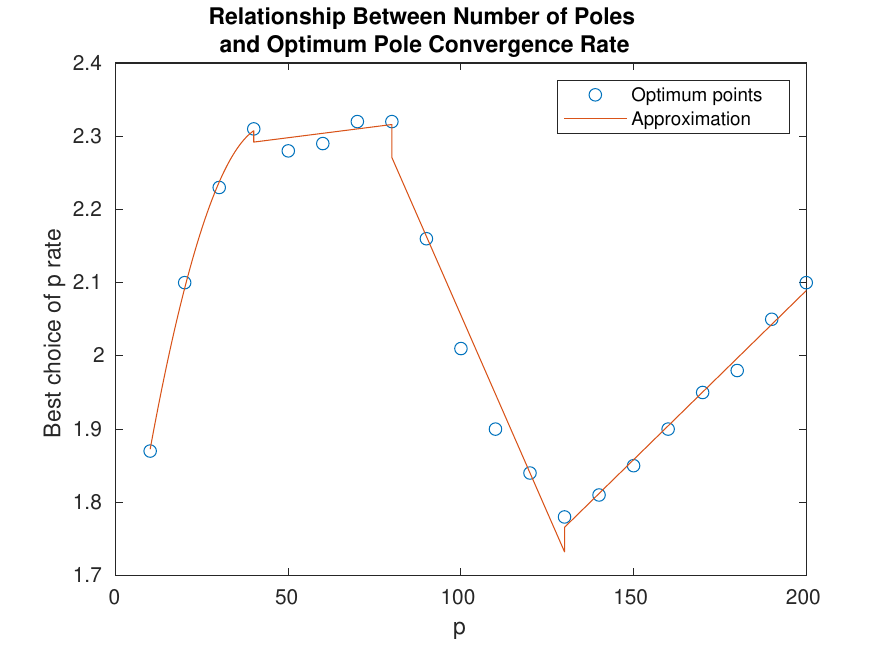}
     \caption{The optimal pole convergence rates found from the previous plot}
     \label{pole vs rate vs error}
\end{figure}

In figure \ref{pole vs rate vs error} we see that for $p_r \le 1.2$ and $p_r \ge 2.3$ all the lines exhibit similar behaviour, and as expected, the higher the pole number, the smaller the error. Interestingly we see a split in behaviour at the 80 pole per corner mark, where the lines for $p > 80$ have a sudden drop before following the upward trend of the $p \le 80$ family. As $p$ increases, this drop point recedes further backwards, and at $p = 130$ we hit the limit imposed on us by the finite precision of floating point arithmetic. To give a small safety margin, we choose a point slightly away from the jagged area after the drop as our minimum for each value of $p$. We plot the minima against the number of poles per corner, and see four clear sections. We use a least squares regression to find a piecewise equation that fits the data, which we label as `approximation' in the plot.
\begin{equation}
f(p) = 
    \begin{cases}
    \begin{alignedat}{3}
    &-0.000375p^2+0.0333p+1.578, & &\quad \text{if} & p &< 40\\
    &0.0006p+2.268, & &\quad \text{if} & 40 \le p & < 80 \\
    &-0.0108p+3.133, & &\quad \text{if} & 80 \le p &< 130\\
    &0.00462p+1.165, & &\quad \text{if} \quad & 130 \le p &
    \end{alignedat}
    \end{cases}
\end{equation}

We also see that there is no real advantage to increasing the number of poles past 130, as we have already hit the limit. There is a small region around $p_r = 1.5$ where the $p = 200$ line dips slightly below all the other lines, but apart from fluctuations like this, the best that this method can achieve is 10 digits of accuracy. This is in line with Trefethen and Gopal's comments in \cite{doi:10.1073/pnas.1904139116}.

\subsection{Distribution of Sample Points}

Now that we have a good understanding of the poles, we can move on to the sample points. As we have already seen, we can easily find choices for the distribution and number of sample points that lead to good results, but every sample point adds a row to the problem matrix. This means that any unnecessary sample points will slow down the computation, potentially to a significant degree in the case of extreme choices. We also note that while having many sample points does not appear to hinder convergence, it is possible that choosing fewer points can be better, as we will see. We plot the number of sample points against the error for a range of values of $p$ and $A$.

\begin{figure}[H]
     \centering
     \includegraphics[width=0.9\textwidth]{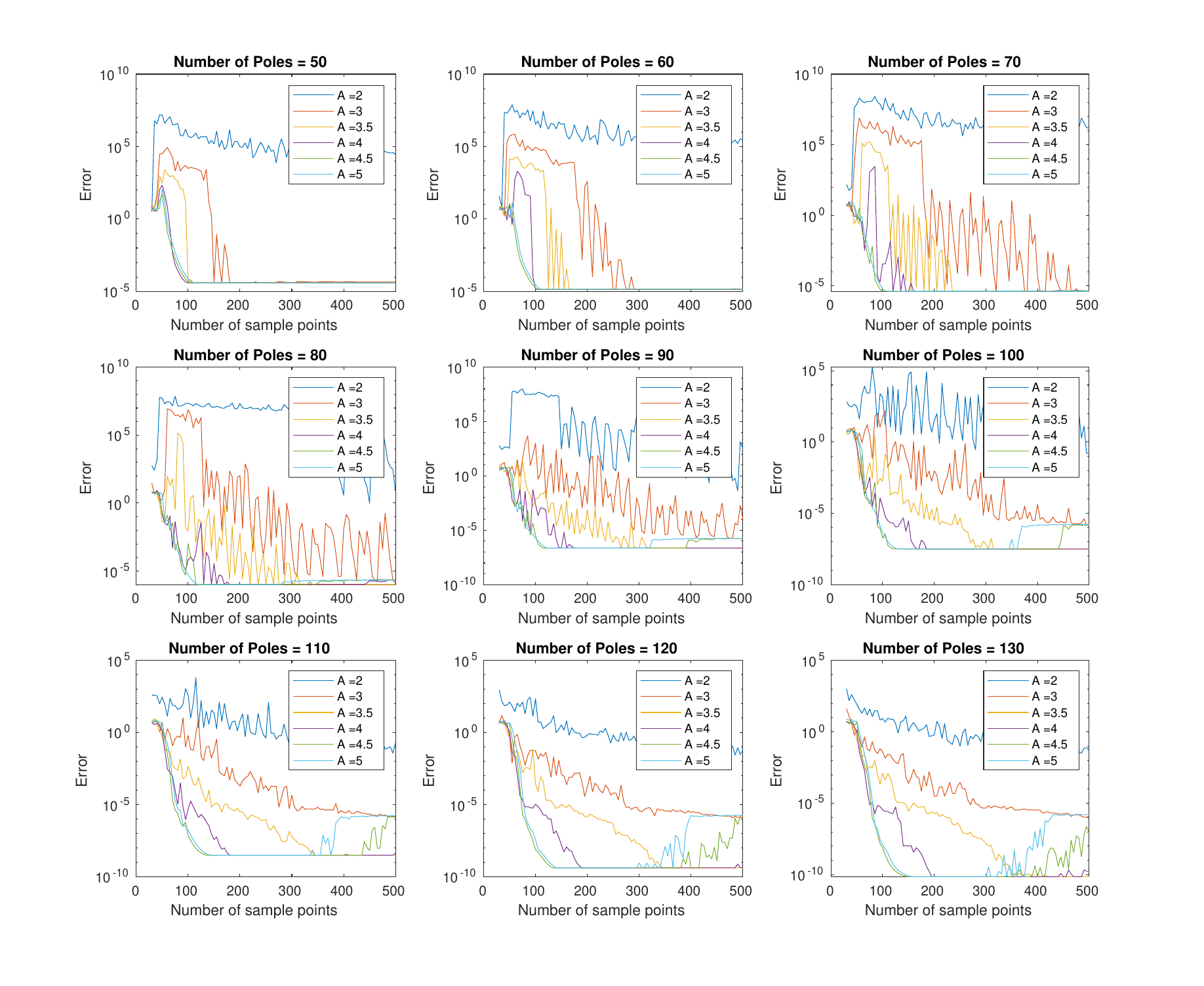}
     \caption{The relationship between $p$, $A$, $s$, and the error}
\end{figure}

We notice that convergence is facilitated when there are fewer poles, which is mainly shown by looking at the behaviour of the $A = 3$ line or where the $A = 4$ line intersects the axis. In all the plots the $A = 4.5$ and $A = 5$ lines converge very fast to the minimum error, but curiously, after a period of remaining level, they start climbing again to around $10^{-5}$ for the larger values of $p$. After reaching this new error, they level out again. We have also observed this for higher values of $A$, and note that the larger $A$ is, the earlier the line starts to deviate from the minimum error. We even see the $A = 4$ line show signs of this behaviour in the last two plots, and it may be the case that this happens to all values of $A$ if the number of poles per corner is high enough. Luckily this does not affect us as we do not plan on using extremely large numbers of poles, and thus we will not explore this area in this dissertation.

Using $A = 5$ and $s = 160$ guarantees good performance, at least for this example problem. Increasing the number of sample points is typically safe, but should be done with $A = 4$ to make sure the error stays at a minimum. $A = 4$ and $s = 200$ is also a good choice to use, although if low numbers of poles are used it may be worth lowering the number of sample points to decrease computation time. When generating plots, the solution does not need to be more accurate than two or three digits, and the computation time for generating the plot will be far greater than the time taken to find the solution, so only a small percentage of time saving could be achieved by lowering the number of sample points. We advise that it is not worth fine-tuning the number of sample points much beyond our recommendations here, unless the solution is being evaluated on a small number of points.

We are now in a position to discuss the claim of Trefethen and Gopal that the sample points need to be clustered exponentially near the corners. We use functions on $[0, 1]$ to describe distributions, so we will use the map $F_n:[0, 1] \to [0, n]$ given by $F_n(t) = 1-\frac{t}{n}$ to convert them to the same territory as Trefethen and Gopal's.

That is, $S = \left\{ f\left(\frac{j}{n}\right) : j = 0, \frac{1}{n},\cdots, 1\right\} = \left\{ f(F(k)) : k = 0,1,\cdots, n \right\}$ describes the same set of sample points. We show Trefethen and Gopal's distribution and ours below.
\begin{align*}
    \text{Trefethen and Gopal's:} \quad & \exp \left(-\frac{4t}{\sqrt{n}}\right)\\
    \text{Ginn's:} \quad & \left( 1-\frac{t}{n} \right)^A \exp \left(-\frac{4t}{n} \right)\\
\end{align*}
\begin{figure}[H]
     \centering
     \includegraphics[width=\textwidth]{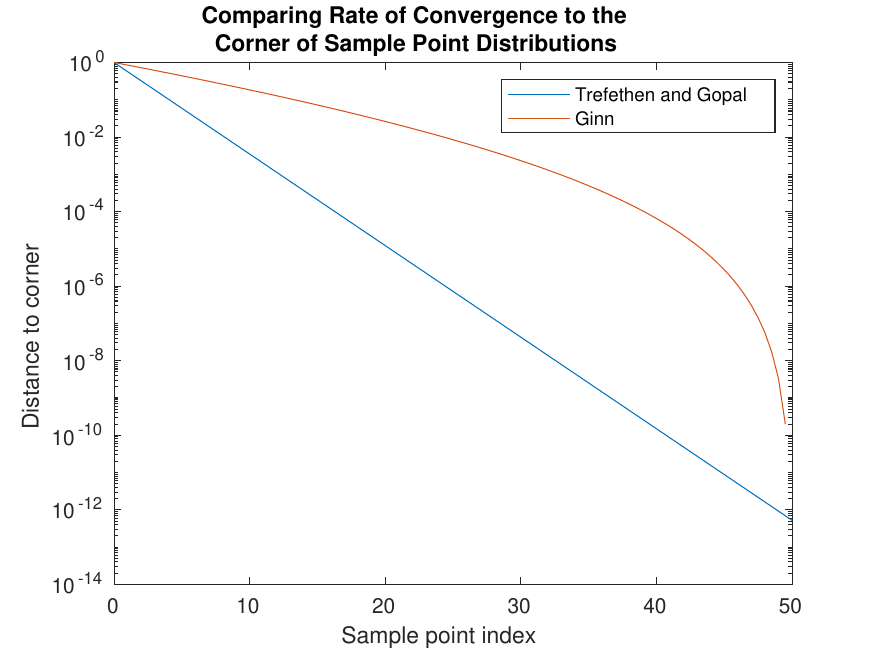}
     \caption{The two distributions of sample points being considered}
     \label{distributions of sample points}
\end{figure}

Taking logs we have,
\begin{align*}
    \text{Trefethen and Gopal's:} \quad & -\frac{4t}{\sqrt{n}}\\
    \text{Ginn's:} \quad & A \log \left( 1-\frac{t}{n} \right) -\frac{4t}{n}\\
\end{align*}
We see that the slope of both lines in figure \ref{distributions of sample points} is non increasing, therefore both exhibit the exponential convergence prescribed by Trefethen and Gopal \cite{doi:10.1073/pnas.1904139116}, although Trefethen and Gopal's converges at a faster rate, and this becomes even more pronounced as $n$ increases. Trefethen and Gopal also use a set of equally spaced points superimposed with the exponentially distributed points, presumably to get higher resolution in the middle of the edges, although we do not require this in ours. When these points are removed, we see the familiar fan shapes expanding out from the midpoints of the edges. We encountered this phenomenon in our earlier experiments with low sample point density around the middle of the edges.Now that we have chosen our parameters optimally, we see the expected root exponential convergence.

\begin{figure}[H]
    \centering
    \includegraphics[width=0.8\textwidth]{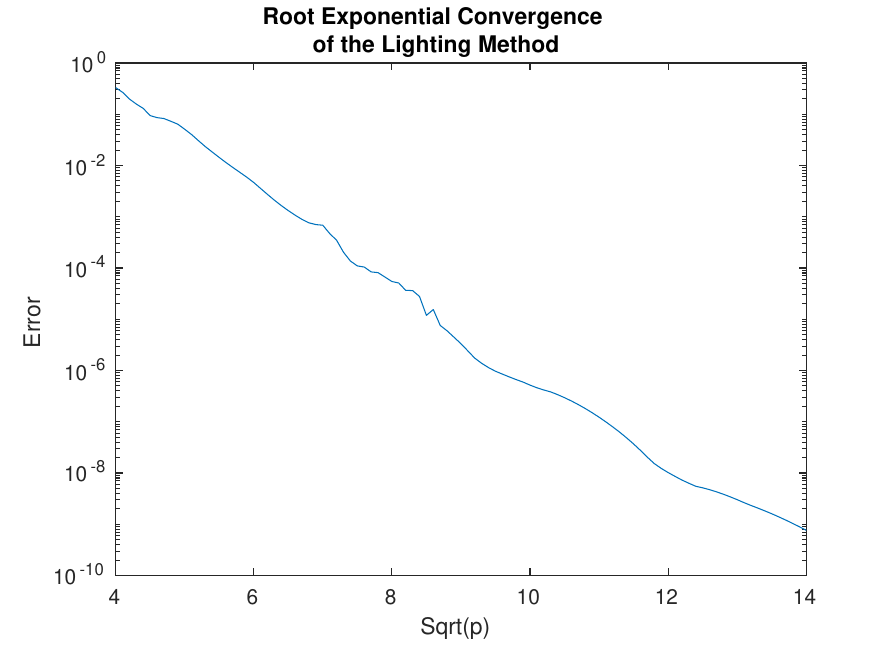}
    \caption{We have a straight line when plotting $\sqrt{p}$ against $\log(\text{error})$, which demonstrates the celebrated convergence rate of the method}
\end{figure}

\subsection{Alternative Solution Forms}\label{alternative solution forms}

So far we have used the same form of solution as Trefethen and Gopal \cite{helmcode}, given by equation \eqref{Form of solution}, although there are other expansions we could have used. Firstly, we recall the expansion inspired by \cite{multipole} that includes terms of the form $H_n^{(1)}(k|z|) \frac{z_*^{-n}}{|z_*^{-n}|}$ that we looked at earlier \ref{Form of solution extended runge}. However, this increases computation time significantly while doing very little to reduce the error. It performs better on regions where the Runge part of the solution needs more focus, although in these cases we have found it is better to use the original form with a few more terms in the Runge part instead. It was for these reasons that we used equation \eqref{Form of solution} instead.

One area we have found improvement in is forms that use higher order Hankel functions in the Newman part. We will call these other forms m-Newman forms, and note that Trefethen and Gopal's form is the case $m = 1$. The motivation behind this is that for a problem dominated by the Newman part, such as our test problem, we want more terms in the series dedicated to resolution of the corner singularities. We give a general m-Newman form below:

\begin{equation}
    \sum_{j=1}^{N_1} \sum_{n=0}^{m} \left(a_j^n H_n^{(1)}(k|z_j|) \frac{z_j^n}{|z_j^n|} \right) + \sum_{n=0}^{N_2}c_nH_n(k|z_*|) \frac{z_*^n}{|z_*^n|}
    \label{m newman form}.
\end{equation}

The main strength of these forms is on problems where it is hard to reach any convergence at all, and the error stays stubbornly around $O(1)$. For such problems, it is tempting to use a large number of poles that get very close to the corner, although we saw earlier that this leads to problems and we need to insert a limit on how close the poles can get to the corners. Using higher order m-Newman forms allows us to get more out of each pole in exchange for longer computation times, and thus achieving better results. We have also found that as m increases, the optimal rate at which the poles converge to the corners decreases, which allows to use even more poles. This effect is not very large, but it can be just enough to be able to get one or two digits of accuracy which is sufficient to produce a viable plot. We will be using this in the next section when necessary, although we will defer a thorough analysis to further research.

\section{Verifying and Exploring Wave Behaviour}

In this section we will explore whether our solution waves behave as we expect them to. This will entail looking at various wave behaviours for a variety of regions, and verifying whether our produced solutions match. We will omit details on choices of parameters and the maximum error, but guarantee that all solutions are sufficiently accurate for plotting purposes.

\subsection{Superposition}

Perhaps the most basic property of waves is that of superposition. This is where the amplitude of multiple waves occupying the same point in space add together to create a resultant wave. This is built in to our solution by the linearity of the Helmholtz equation, where if $u_1(z)$ and $u_2(z)$ are solutions, then $u(z) = u_1(z)+u_2(z)$ will also be a solution.
\vspace{-2mm}
\begin{figure}[H]
     \centering
     \begin{subfigure}[t]{0.45\textwidth}
     \label{superposition a}
         \centering
         \includegraphics[width=\textwidth]{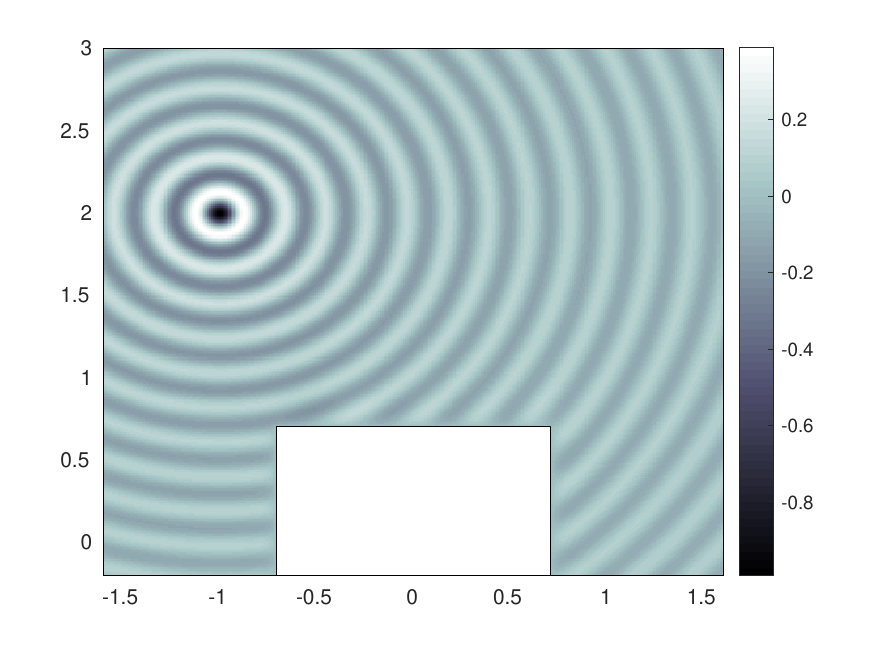}
         \vspace{-9mm}
         \caption{A point source at (-1, 2)}
     \end{subfigure}
     \hfill
     \begin{subfigure}[t]{0.45\textwidth}
     \label{superposition b}
         \centering
         \includegraphics[width=\textwidth]{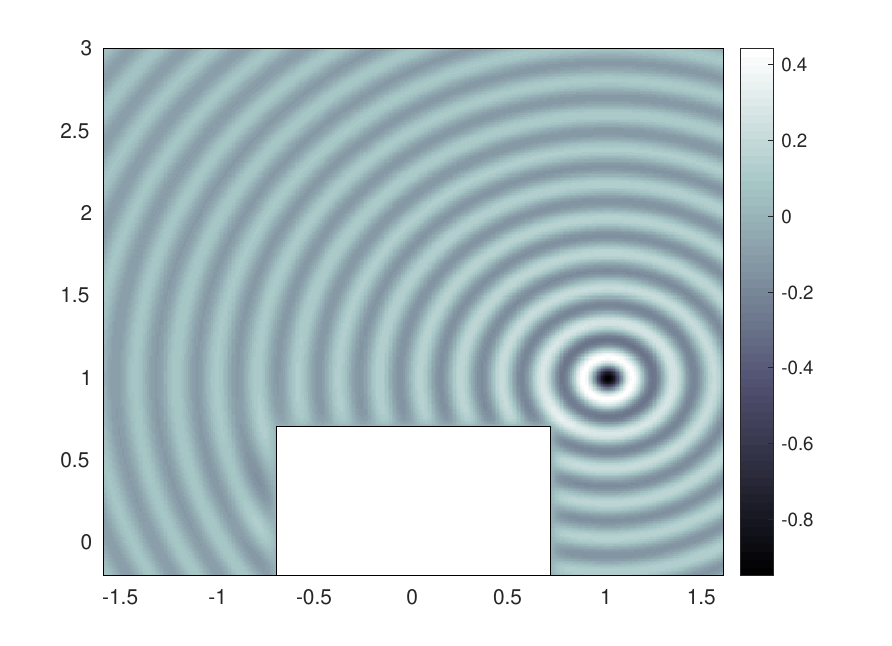}
         \vspace{-9mm}
         \caption{A point source at (1, 1)}
     \end{subfigure}
     \begin{subfigure}[t]{0.45\textwidth}
     \label{superposition c}
         \centering
         \includegraphics[width=\textwidth]{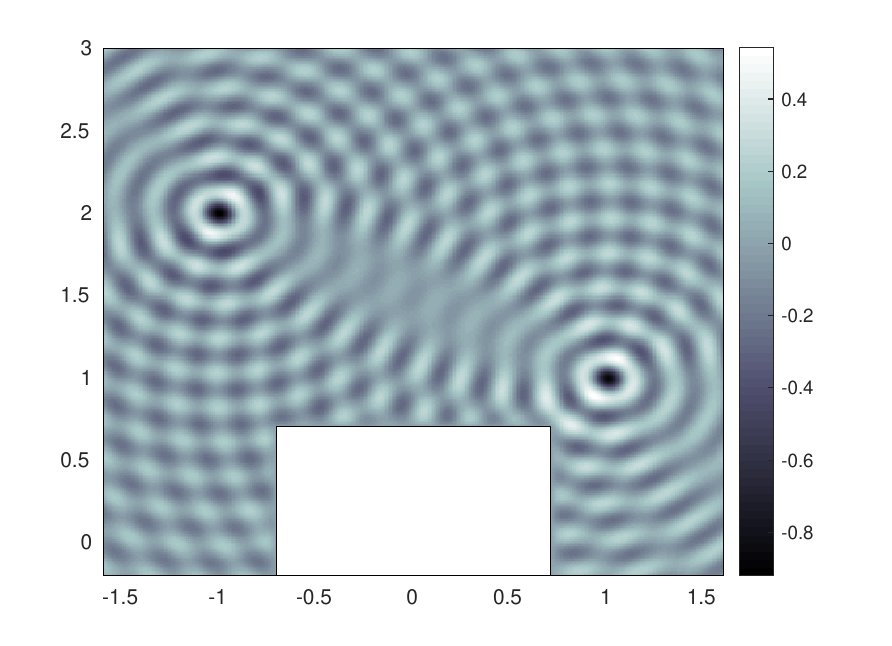}
         \vspace{-9mm}
         \caption{Both point sources}
     \end{subfigure}
     \hfill
     \begin{subfigure}[t]{0.45\textwidth}
     \label{superposition d}
         \centering
         \includegraphics[width=\textwidth]{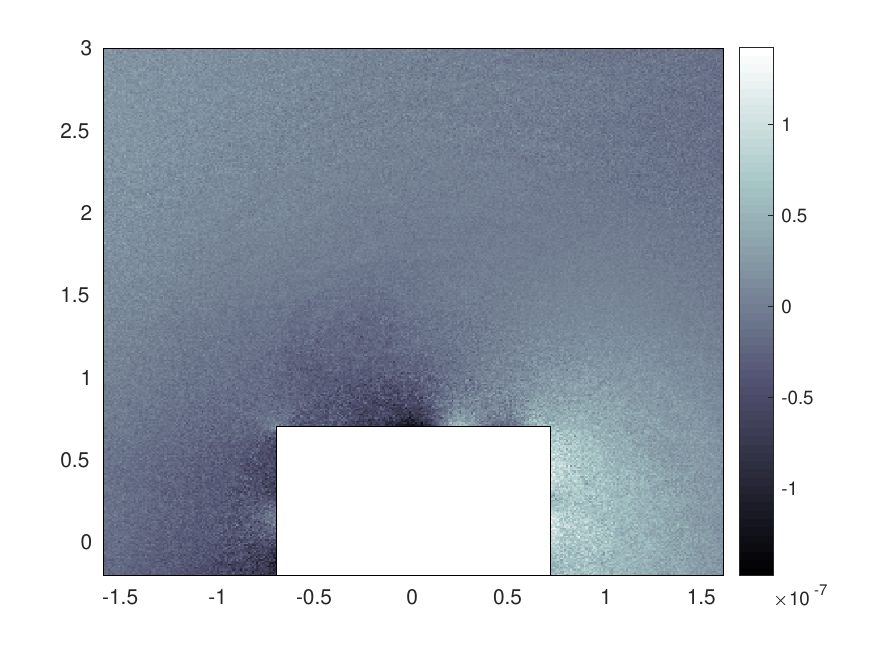}
         \vspace{-9mm}
         \caption{$u_3(z)-(u_1(z)+u_2(z))$}
     \end{subfigure}
        \caption{Demonstrating the difference between a linear combination and the true solution of the combination is approximately 0}
\end{figure}

We verify that this is indeed the case by solving over a region with three sets of boundary data: one where $u_1(z) = f_1(z)$, another with $u_2(z) = f_2(z)$, and finally $u_3(z) = f_1(z)+f_2(z)$ where $z \in \delta \Omega$. We then plot the the reflected part of $u_3(z)-(u_1(z)+u_2(z))$, and we should get approximately 0 from the two solutions cancelling out.

We observe that the error over the region is on the scale of $O(10^{-7})$, verifying our hypothesis that the result of the Lightning Method obeys the law of superposition as this is very close to 0. Equivalently, the Lightning Method operator is approximately linear\footnote{This statement is true for problems defined on the same region, and only up to some error depending on the problem parameters and region. We must also have convergence for this statement to hold as well.}.

\subsection{Diffraction}

When a wave enters a wider region, such as coming out of a slit or a tunnel opening out, the wave spreads out. We will look at this in the case of a wave propagating around a corner. It is a well known fact that the degree of diffraction decreases with wave number, and the wave number remains constant. For illustration purposes we have increased the sensitivity of the colour scale to be able to see the fainter parts of the wave. Typically this is demonstrated with an infinite wall, although so that we have convergence we will settle for an approximation of this with a wall of height 3. Unfortunately this means we will see some waves coming around the other side of the wall.

\begin{figure}[H]
     \centering
     \begin{subfigure}[t]{0.45\textwidth}
         \centering
         \includegraphics[width=\textwidth]{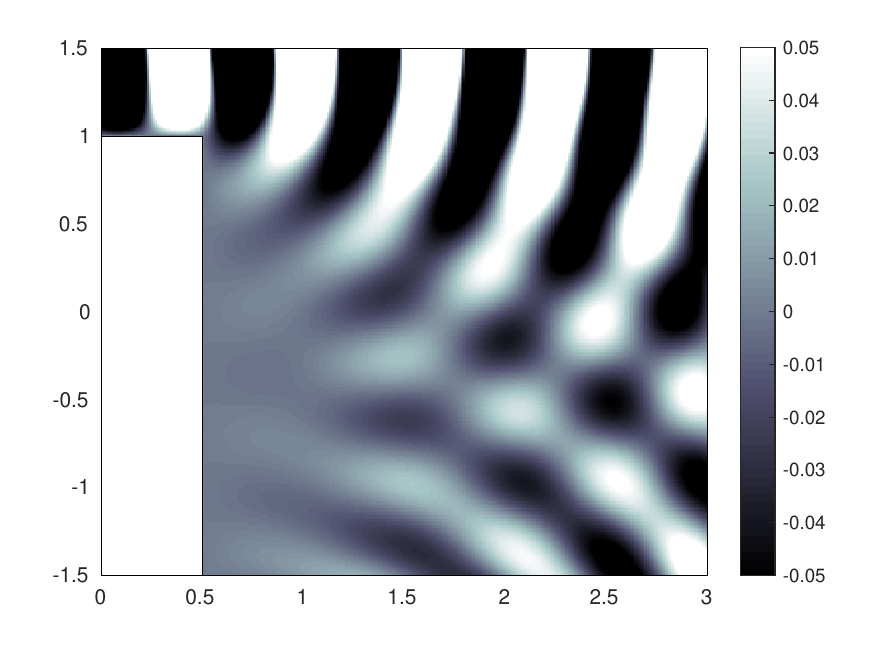}
         \caption{Wave number = 10}
     \end{subfigure}
     \hfill
     \begin{subfigure}[t]{0.45\textwidth}
         \centering
         \includegraphics[width=\textwidth]{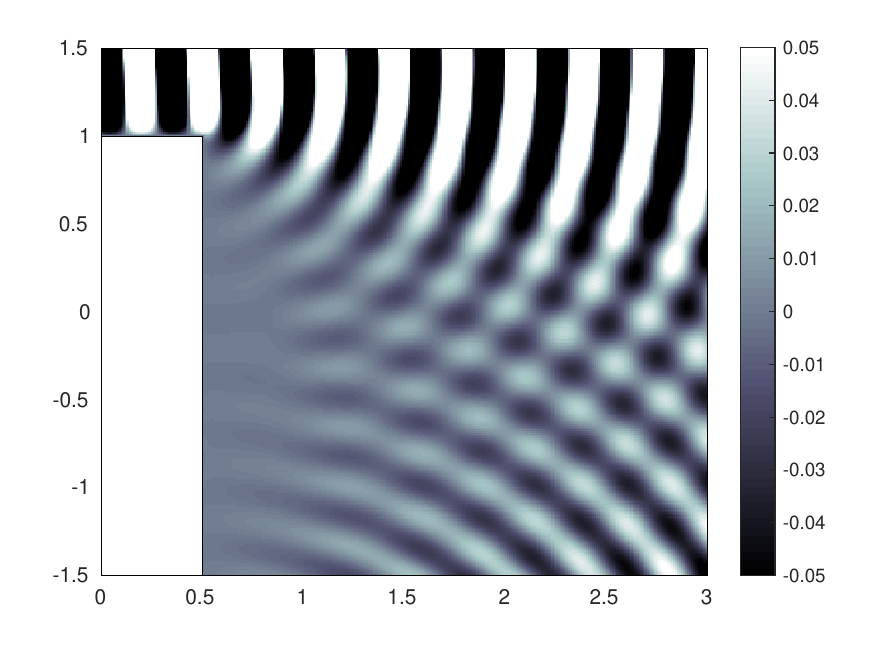}
         \caption{Wave number = 20}
     \end{subfigure}
     \begin{subfigure}[t]{0.45\textwidth}
         \centering
         \includegraphics[width=\textwidth]{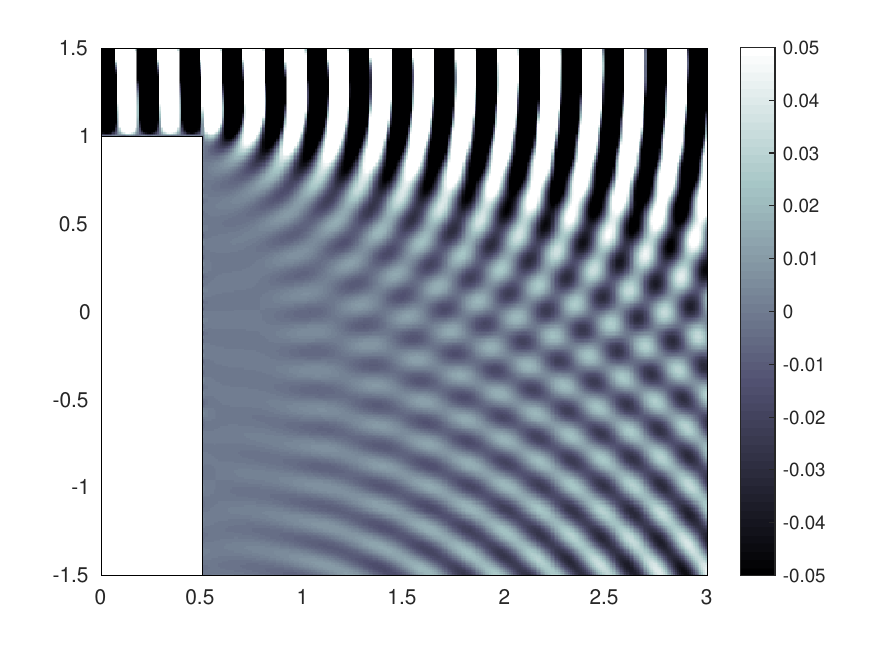}
         \caption{Wave number = 30}
     \end{subfigure}
     \hfill
     \begin{subfigure}[t]{0.45\textwidth}
         \centering
         \includegraphics[width=0.8\textwidth]{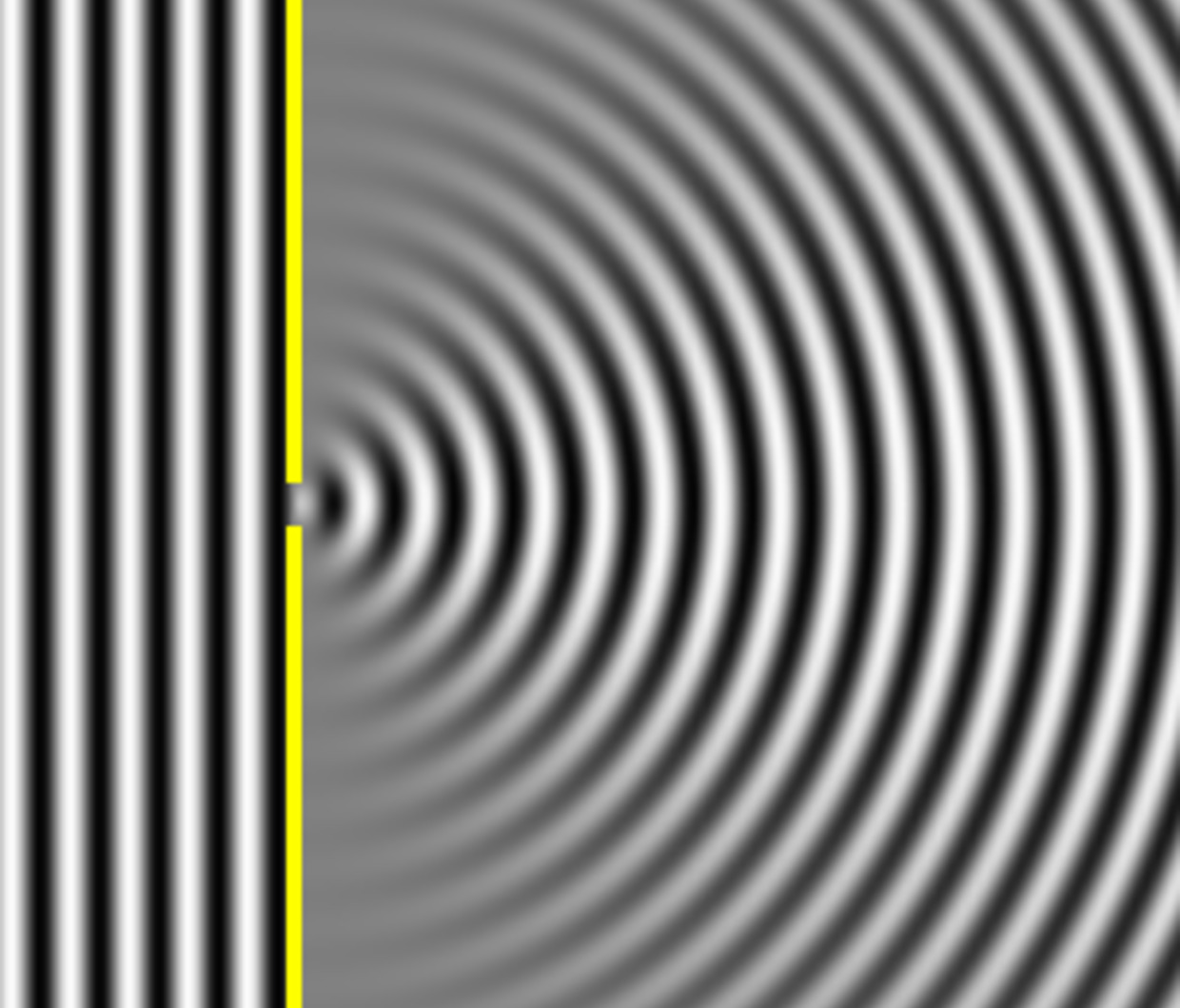}
         \caption{A wave passing through a slit \cite{Diffraction}}
         \label{example diffraction}
     \end{subfigure}
        \caption{Waves of varying wave number diffraction around a corner, and an accurate example of diffraction}
\end{figure}

\newpage

We see in the plots above that all the waves diffract around the corner, filling all the space around the corner, as predicted by the Huygens–Fresnel principle. There is also a difference in the size of the angle to which the most prominent part of the wave diffracts, and this also matches our prediction. The wavelength between the circular arcs is also observed to be constant, which follows the expected behaviour mentioned above. Decay of the amplitude as the angle of diffraction increases to $\frac{\pi}{2}$ is also evident, and our plots match the diffraction behaviour of the example plot in \ref{example diffraction} well.

\subsection{Reflection}

The next avenue of investigation is whether or not the solution wave reflects off surfaces in the manner that we expect. Here we aim to verify that the law of reflection holds, that is, angle of incidence = angle of reflection. Below we demonstrate the components of the solution by plotting the incident and reflected waves both separately and combined. We observe that the wave length of the reflected part is unchanged, and this decomposition will also aid us in determining the angles more accurately.

\begin{figure}[H]
     \centering
     \begin{subfigure}[t]{0.32\textwidth}
         \centering
         \includegraphics[width=\textwidth]{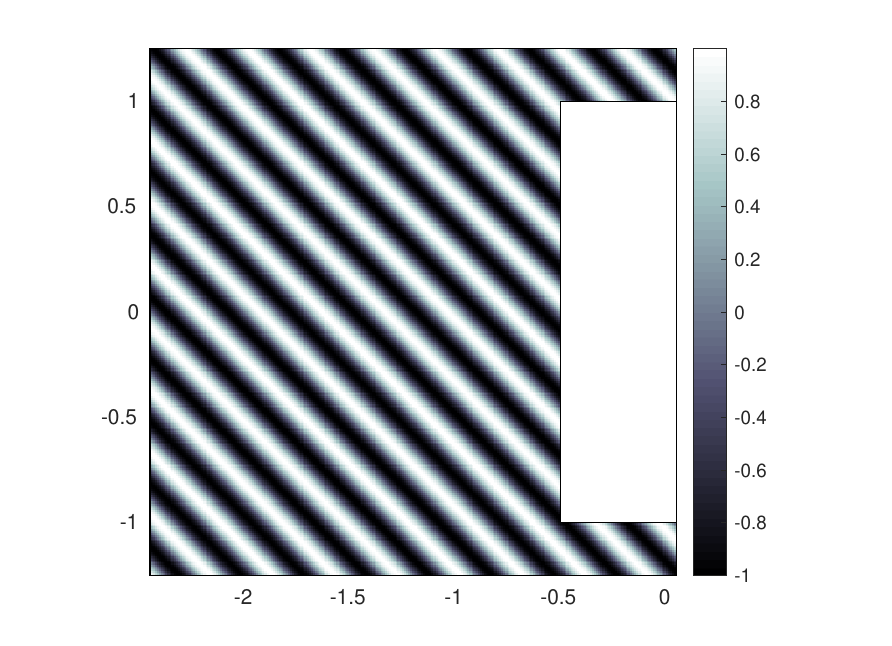}
         \caption{Incident wave}
     \end{subfigure}
     \begin{subfigure}[t]{0.32\textwidth}
         \centering
         \includegraphics[width=\textwidth]{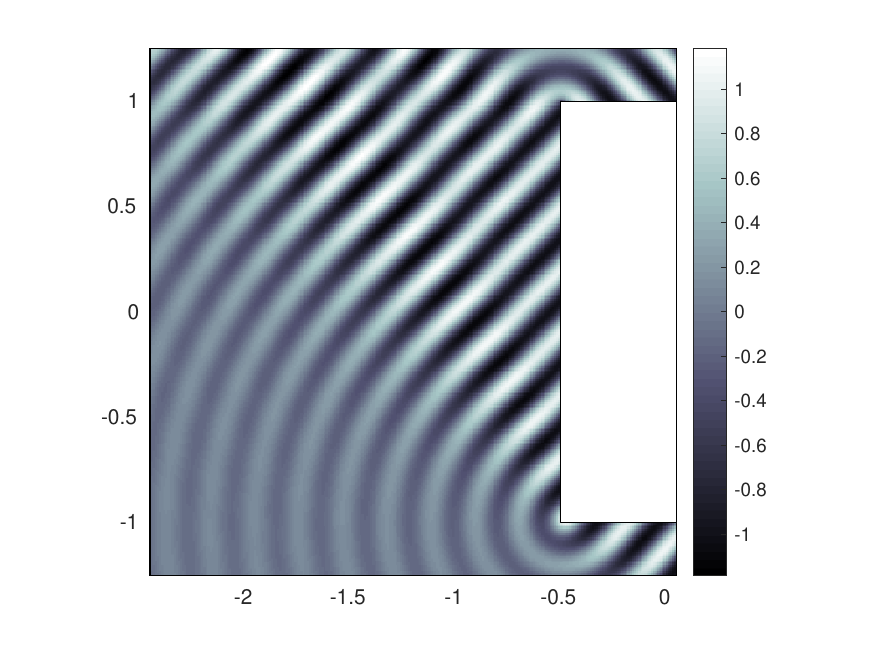}
         \caption{Reflected wave}
     \end{subfigure}
     \begin{subfigure}[t]{0.32\textwidth}
         \centering
         \includegraphics[width=\textwidth]{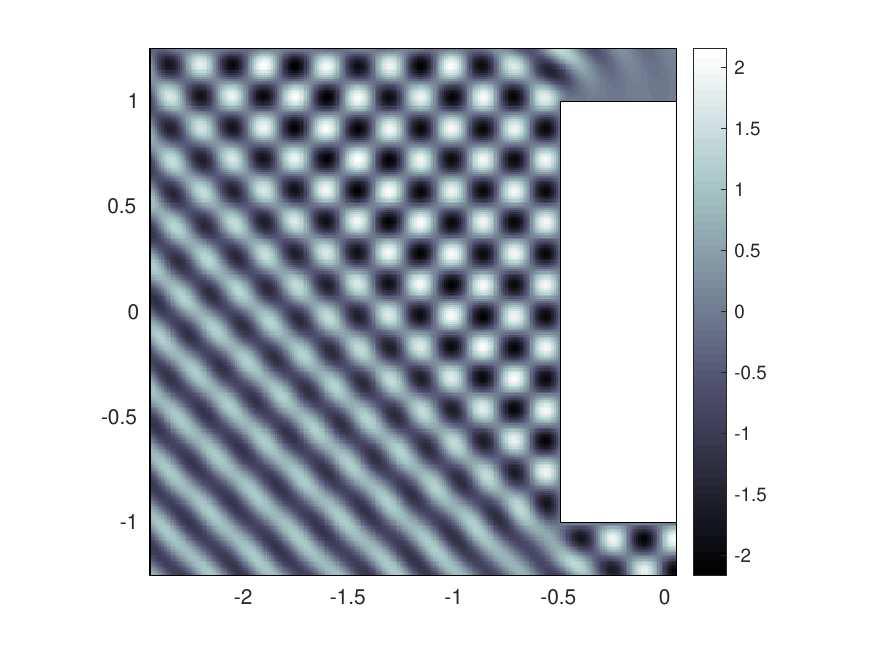}
         \caption{Combined solution}
     \end{subfigure}
        \caption{An example showing the components of the wave hitting a surface at an angle of $\frac{\pi}{4}$}
\end{figure}

Below we record our results in a table, and then plot them on a graph. We note that the angles were measured by hand in GIMP \cite{gimp}, and thus have some error.

\begin{center}
\begin{tabular}{|c|c|}
    \hline
    Angle of Incidence (rad) & Angle of Reflection (rad) \\
    \hline
    0 & 0 \\
    $0.05\pi$ & $0.049\pi$ \\
    $0.10\pi$ & $0.098\pi$ \\
    $0.15\pi$ & $0.152\pi$ \\
    $0.20\pi$ & $0.201\pi$ \\
    $0.25\pi$ & $0.249\pi$ \\
    $0.30\pi$ & $0.293\pi$ \\
    $0.35\pi$ & $0.350\pi$ \\
    $0.40\pi$ & $0.404\pi$ \\
    $0.45\pi$ & $0.445\pi$ \\
    $0.50\pi$ & $0.500\pi$ \\
    \hline
\end{tabular}
\end{center}

\begin{figure}[H]
    \centering
    \includegraphics[width=0.8\textwidth]{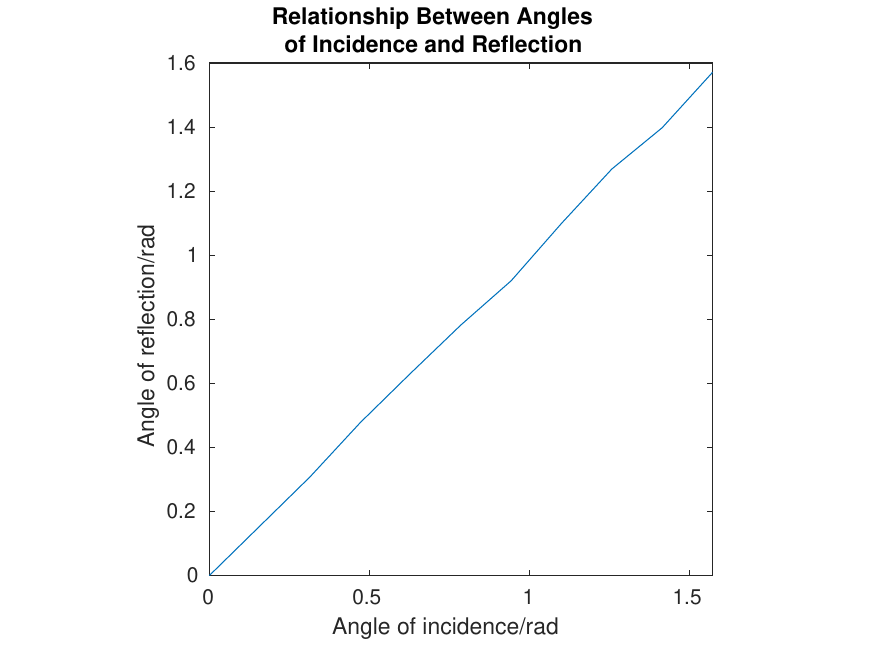}
    \caption{Testing whether or not the law of reflection holds}
\end{figure}

We confirm that the line is $y = x$, up to measurement error, and this is very good evidence that our solution waves satisfy the law of reflection.

\subsection{Shadows}

We conclude this section by investigating the shadow region. To minimise the effects of diffraction and get a representative shadow, we will use the same L shaped region that Trefethen and Gopal used in \cite{doi:10.1073/pnas.1904139116}. Here we use a solution with a maximum error on the boundary of $8 \cdot 10^{-11}$ as we need high accuracy to explore values that are close to 0.

\begin{figure}[H]
     \centering
     \begin{subfigure}[t]{0.49\textwidth}
         \centering
         \includegraphics[width=\textwidth]{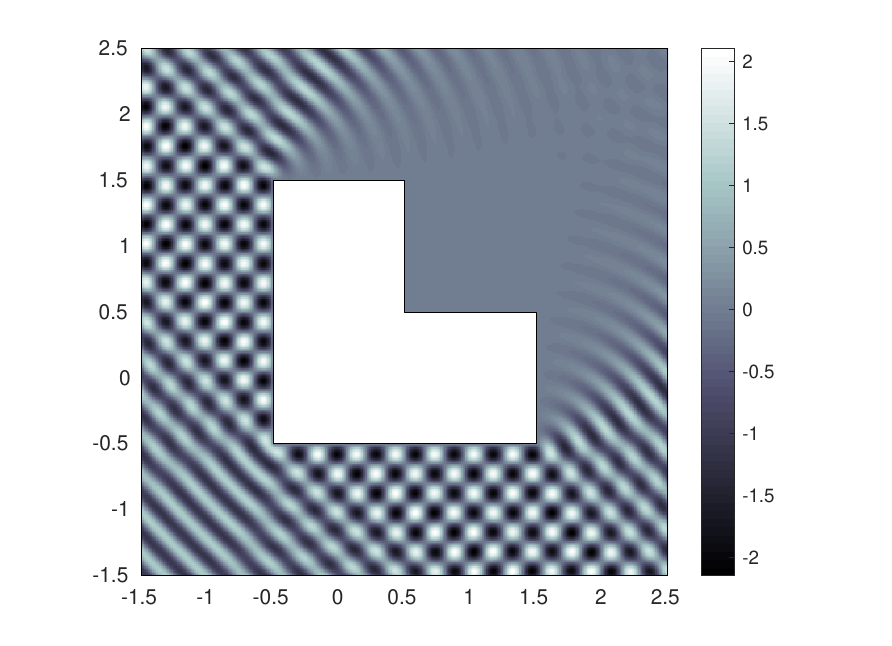}
         \caption{The L shaped region}
     \end{subfigure}
     \begin{subfigure}[t]{0.49\textwidth}
         \centering
         \includegraphics[width=\textwidth]{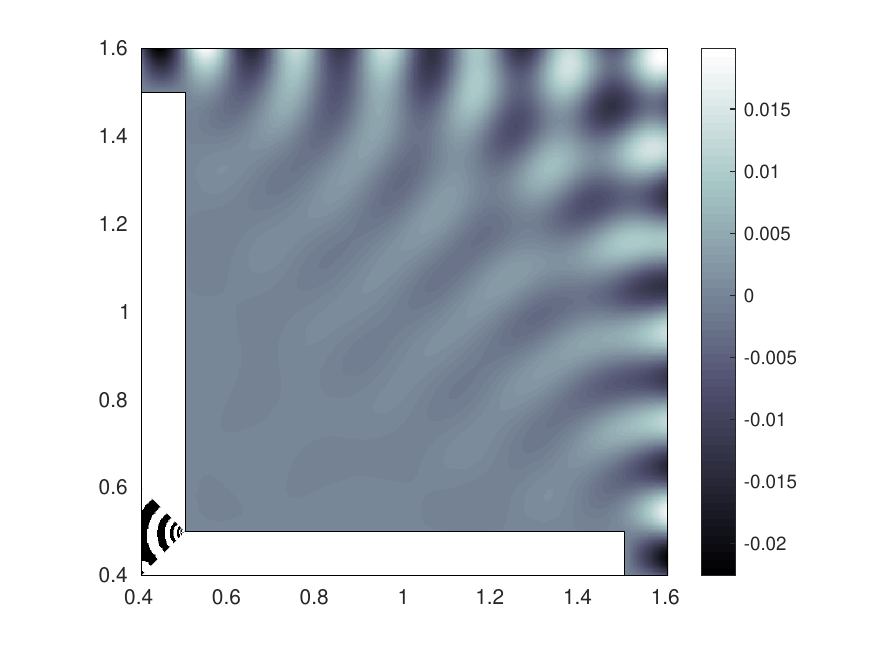}
         \caption{Part of the shadow region}
         \label{the shadow region}
     \end{subfigure}
        \caption{The example region we will use. We note that the discrepancy in the lower left of \ref{the shadow region} is due to a bug in MATLAB's \cite{MATLAB} export to eps feature, and we request the reader to ignore it.}
\end{figure}

\begin{figure}[H]
     \centering
     \begin{subfigure}[t]{0.49\textwidth}
         \centering
         \includegraphics[width=\textwidth]{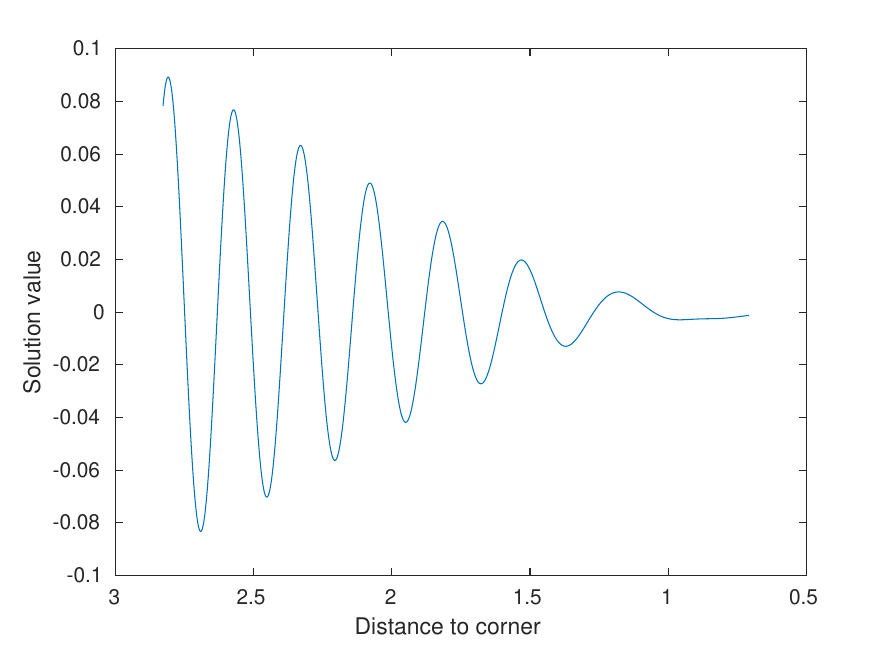}
         \caption{Away from the corner}
     \end{subfigure}
     \begin{subfigure}[t]{0.49\textwidth}
         \centering
         \includegraphics[width=\textwidth]{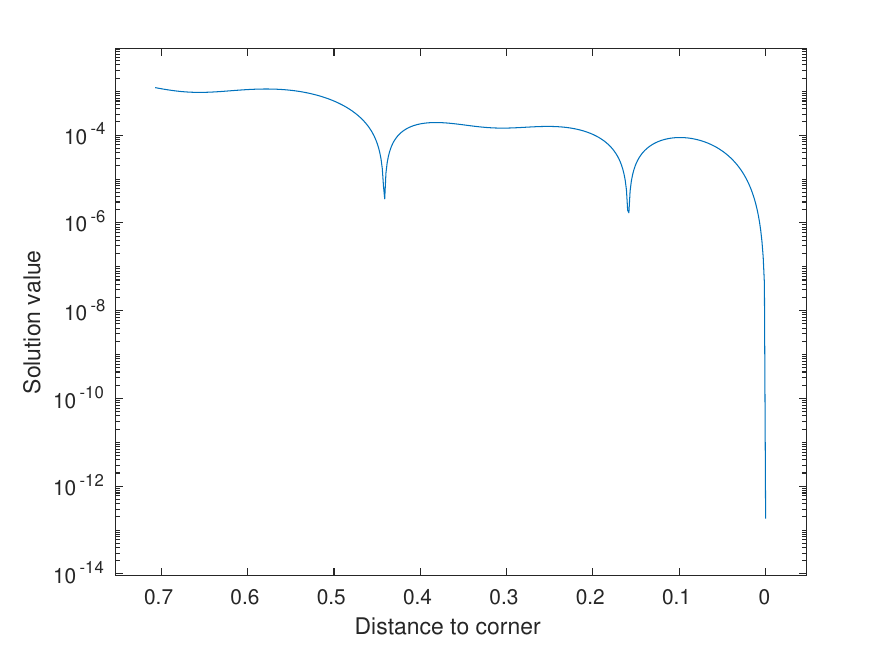}
         \caption{Near the corner}
     \end{subfigure}
     \caption{Decay into the shadow region.}
\end{figure}

We will consider points approaching the corner at $\left(\frac{1}{2}, \frac{1}{2}\right)$ along a line at an angle of $\frac{\pi}{4}$. The decay into the shadow starts out linearly, although this breaks down at a distance of around 1.5 away from the corner where it slows down. It then flattens out significantly, and sits at around $10^{-4}$, interestingly not converging to zero intensity. Repeating the experiment with far lower accuracy gives similar results which suggests this is not due to insufficient accuracy of the method. We see that there is a small area where the sign changes, around 0.17 and 0.45 away from the corner, and this can just about be made out in \ref{the shadow region}. We theorize that these effects are due to wave diffraction that hasn't entirely been eliminated by the region geometry.

\section{Discussion}

We start by using our results from section 4 into a more practical and concise guide to setting method choices. It would be slow and unwieldy to perform that level of analysis to each problem under consideration, so here we outline the understanding from it that allow us to streamline the process.

As we learnt, the key property needed from the sample points is that they are sufficient in number, and that they converge to the corners fast enough. For this reason, we initially choose $s = 1000$, $A = 10$, and $B = 4$. Then, for the purpose of speed, we only use 50 poles per corner, and consider a range of values for $p_r$, usually from 0.1 to 3.1 in increments of 0.3. By displaying the error, it should be clear roughly where the optimum lies, and a finer analysis can be done around this optimum. The error profile provides an invaluable tool to use in diagnosing convergence issues. As we saw before, upward spikes at the corners suggest that $p_r$ is too small, and downward spikes indicate that it is too large.

It is at this stage that we find our biggest opportunities for problems, and if the error is around $O(1)$ then we should start scaling back our ambitions on how low an error we can achieve. The first port of call in this case should be to ensure that we do actually have enough sample points, so we repeat the above experiment with $s = 10000$. For such difficult problems, the pole convergence rate is usually lower, so some time can be saved by limiting $p_r$ to 2.2 instead of 3.1. If this does not give any noticeable difference, then we have found that it is safe to assume that the sample points were not the issue, and $s$ can be turned back down to a more sensible number, such as 1000. We note that we can use figure \ref{bad sample points} to diagnose poor sample point choices if we want to be more precise than a brute force fix as we have suggested.

Our next attack is to try using more poles. For large numbers of poles, we recommend checking that they are not being cut off, and one may want to modify our code to output a warning if this happens. As we have seen in figure \ref{poles vs pole rate}, adding more poles tends to reduce our error for a given value of $p_r$, and it makes intuitive sense that this would help. We typically use the best value of $p_r$ from the above experiment, and then test values of $p$ from 50 to 150 in increments of 10 and look to see if there is improvement.

If the problem still doesn't show signs of giving in, then the prognosis is bleaker still. As mentioned earlier in section 4, we do have one more trick up our sleeves. It is at this stage that we employ the tactics of section \ref{alternative solution forms}, and try out higher order m-Newman forms. Due to the significant increase in size of the problem matrix, the number of poles will usually have to be cut back down to around 50 per corner, maybe lower for regions with more corners, and the number of sample points may need to be lowered as well. We usually test $m = 1,2, \cdots, 5$, although it can be worth going higher to $m = 12$ for example. We warn that this can greatly increase the computation times, and this parameter should be kept quite low.

As we said earlier, in our experiments we have found that the error for troubling regions can usually be brought down to $O(10^{-2})$, which is good enough for a plot. If this is not the case, there are a few last hope efforts that can be made to possibly gain a little bit of extra performance. The simplest method is to vary multiple parameters at once, in the hope a slight improvement can be found with some combination, although this is rather computationally expensive. If the error profile shows a particularly stubborn corner, one can try to increase the number of poles just at that corner, and this should allow for higher m-Newman forms to be used, as the size of the problem matrix is smaller. If one is really desperate, one can attempt to fully optimise the number of poles needed at each corner, although we have not explored this, and it would be an area for further research. Testing if the poles can be placed any closer to the corners is another potential avenue, but can be a risky game, as demonstrated in figure \ref{minimum distance vs error}.

In chapter 5, we moved on to look at how well the method managed to capture wave behaviour. We conclude that it was a resounding success, with all tested phenomenon occurring exactly as expected. A topic we leave to further research would be to modify our code to work with multiple regions, and then test the double slit experiment to verify that the familiar diffraction pattern is produced. Doing a preliminary test, we see that the method is fully compatible with multiple regions, achieving 8 digits of accuracy in the below example. Here we have used two internal points.

We echo the comments of Trefethen and Gopal that the \textit{Lightning Method} indeed shows great promise, and is certainly an effective tool for simple regions in applications, up to a moderate level of accuracy.

\begin{figure}[H]
    \centering
    \includegraphics[trim={0 15mm 10mm 15mm}, clip]{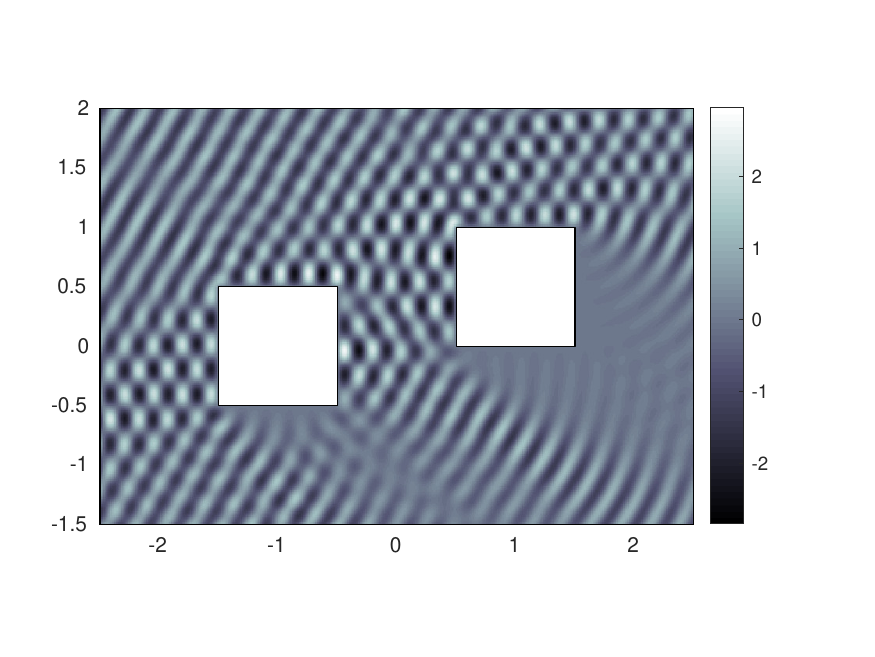}
    \caption{The Lightning Methods works with more than one region.}
\end{figure}

\addcontentsline{toc}{section}{Bibliography}
\bibliographystyle{plain}
\bibliography{Bibliography}

\appendix
\newpage
\section{Appendix}
Here we give our MATLAB \cite{MATLAB} code, both the Lightning Method class itself, and also the interface we used. The author apologises for areas where the code is not as clean as it should be. We also note an inconsistency in notation, where the sample points on the boundary are referred to as ``bdary pts" instead.

\subsection{Lightning Method Class}
\lstinputlisting{helm.m}

\subsection{Lightning Method Interface}
This is the code we use in practise to find the optimal parameters. Our code to generate almost all figures in this dissertation was based on this.
\lstinputlisting{helm_tester.m}

\end{document}